\documentclass[11pt,reqno]{amsart}

\usepackage[utf8]{inputenc}
\usepackage{amsmath,amssymb,amsthm,mathtools}
\usepackage{graphicx}
\usepackage{enumitem}
\usepackage{a4wide}
\usepackage[numbers,sort&compress]{natbib}
\usepackage[hidelinks]{hyperref}

\newcommand{\norm}[1]{\left\lVert#1\right\rVert}
\newcommand{\A}{\mathcal{A}}
\newcommand{\mean}[1]{\langle#1\rangle}

\theoremstyle{plain}
\newtheorem{theorem}{Theorem}[section]
\newtheorem{lemma}[theorem]{Lemma}
\newtheorem{proposition}[theorem]{Proposition}
\newtheorem{corollary}[theorem]{Corollary}

\theoremstyle{definition}
\newtheorem{example}[theorem]{Example}

\theoremstyle{remark}
\newtheorem{remark}[theorem]{Remark}

\numberwithin{equation}{section}
\title[Length bounds and rigidity]{Length bounds and rigidity for ancient curve shortening flow on warped product surfaces}

\author{Beomseok Goh}
\address{Department of Mathematics, Konkuk University, Seoul 05029, Republic of Korea}
\email{zclub8740@gmail.com}

\author{Taehun Lee}
\address{Department of Mathematics, Konkuk University, Seoul 05029, Republic of Korea}
\email{taehun@konkuk.ac.kr}

\subjclass[2020]{Primary 53E10; Secondary 35B40}
\keywords{Curve shortening flow, ancient solutions, warped product surfaces, rigidity, length bounds}

\begin{document}

\begin{abstract}
We study ancient solutions to curve shortening flow on a class of warped product surfaces.
For closed embedded ancient solutions whose curves are not null-homotopic, we prove that graphicality at every time is equivalent to a uniform length bound as $t\to-\infty$, and that these properties hold whenever the length is $o(\sqrt{-t})$.
Under an additional positivity condition on the linearized height equation, every such solution with bounded length is a family of slices.
We derive backward convergence to a limiting circle from the length bound and prove slice rigidity by an $L^2$ estimate for the height perturbation.
\end{abstract}

\maketitle

\section{Introduction}

A family of closed curves $\Gamma_t$ in a Riemannian surface \((M,g)\) is a solution to curve shortening flow if it has a parametrization $F(x,t)$, called a normal parametrization, such that
\begin{equation}\label{eq:csf}
    \frac{\partial F}{\partial t}=\kappa\mathbf{N}=\nabla_\mathbf{T}\mathbf{T},
\end{equation}
where \(\mathbf{T}\) is the unit tangent vector, \(\mathbf{N}\) is a chosen unit normal, and \(\kappa\) is the corresponding signed geodesic curvature.
Curve shortening flow is the negative gradient flow of length.
For classical results on plane curves, see \cite{Gage84,GH86,Grayson87}, and for Riemannian surfaces, see \cite{Grayson89,Gage90,Angenent90}.

We consider this flow on warped product surfaces $M=S^1\times(-\infty,a)$ with $a<\infty$ and metric $g=dz^2+r(z)^2\,d\theta^2$.
A solution is called \emph{ancient} if it is defined for \(t\in(-\infty,T)\).
We relate length bounds, graphicality, and slice rigidity for these solutions.

In the plane, compact convex ancient solutions are shrinking circles or Angenent ovals \cite{DHS10,Angenent92}, and the complete convex case is classified in \cite{BLT20}.
Angenent and You \cite{AY21} constructed nonconvex ancient solutions, and a compact embedded example was constructed in \cite{ZOKA23}.
Embedded ancient solutions with finite entropy have a unique tangent flow at infinity \cite{CSSZ25} and are classified in \cite{CSSZ26}.

On the round sphere, Bryan and Louie \cite{BL16} proved that a convex closed embedded ancient solution is either a stationary equator or a family of shrinking geodesic circles.
Choi and Mantoulidis \cite{CM22} obtained the same conclusion without convexity, assuming instead that the length stays bounded as $t\to-\infty$.
In a general setting, they constructed families of ancient mean curvature flows converging to closed embedded minimal submanifolds.
They also proved that an ancient flow belongs to the corresponding family, up to time translation, if it stays close to the limiting submanifold and its distance to it is integrable in time.
For ancient mean curvature flows of hypersurfaces, see \cite{ADS20,BC19,HS15} for convex rigidity results and \cite{CHL25,MP21} for constructions converging to unstable minimal hypersurfaces in Euclidean space.

Zhou \cite{Zhou17} studied graphical curve shortening flow in warped products.
Fujihara proved that curve shortening flow on warped product surfaces preserves graphicality for arbitrary warping functions \cite{Fujihara24}.
Under suitable assumptions, he also proved that an embedded closed curve that is not null-homotopic becomes graphical after finite time \cite{Fujihara25}.
Borisenko and Miquel \cite{BM12} studied mean curvature flow of graphs in warped products.
For volume preserving flows and constant mean curvature hypersurfaces in warped products, see \cite{GLW19} and \cite{Brendle13}, respectively.

The warping function $r$ is assumed smooth and positive on $I:=(-\infty,a)$.
Unless otherwise stated, we assume that $r'>0$ and that $r$, $r'$, and $r''$ are uniformly bounded on $I$.
We write
\[
f(z):=\frac{r'(z)}{r(z)}.
\]

Since \(r\) is increasing and bounded, it has a finite positive limit
\[
r(a):=\lim_{z\to a^-}r(z)>0.
\]
The boundedness of \(r''\) also gives a one-sided limit \(r'(a)\) and a \(C^{1,1}\) extension of \(r\) to each interval \([b,a]\), \(b<a\).
We do not assume that \(r''\) has a limit at \(a\) or that higher derivatives are uniformly bounded there.

We orient every closed embedded curve in $M$ that is not null-homotopic so that its projection to $S^1$ has degree one.
We call such a curve \emph{graphical} if it can be written as
\[
\Gamma=\{(\theta,z(\theta)):\theta\in S^1\}
\]
for a smooth \(2\pi\)-periodic function \(z\).
Unless otherwise specified, we consider smooth, closed, embedded ancient solutions whose curves are not null-homotopic in $M$.
A \emph{slice} is a curve of the form \(S^1\times \{Z\}\).

By Lemmas \ref{lem:slice} and \ref{lem:zmax} below, a closed ancient solution exists in $M$ if and only if $r'(a)=0$.
We denote by $L(t)$ the length of $\Gamma_t$.

Our first main result is as follows.

\begin{theorem}\label{thm:main}
    Let \(\{\Gamma_t\}_{t\in(-\infty,T)}\) be a smooth, closed, embedded ancient solution to curve shortening flow whose curves are not null-homotopic in \(M\).
    Then the following conditions are equivalent:
    \begin{enumerate}[label=\textup{(\roman*)}]
        \item\label{main:ii}
        For every \(t<T\), the curve \(\Gamma_t\) is graphical over \(S^1\).
        \item\label{main:iii}
        The length is uniformly bounded as \(t\to-\infty\).
        \item\label{main:iv}
        The length satisfies
        \[
        L(t)=o(\sqrt{-t})\qquad \text{as}\quad t\to-\infty.
        \]
    \end{enumerate}
    Under any of these conditions, $\Gamma_t$ converges in $C^1$ to $S^1\times\{a\}$ and $L(t)\to2\pi r(a)$ as $t\to-\infty$.
\end{theorem}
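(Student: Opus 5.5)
The plan is to prove (i)$\Rightarrow$(ii)$\Rightarrow$(iii)$\Rightarrow$(i), and to read off the backward convergence along the way. The implication (ii)$\Rightarrow$(iii) is trivial.

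\emph{(i)$\Rightarrow$(ii).} I use that a graphical curve $\{(\theta,z(\theta,t))\}$ stays graphical, which is Fujihara's preservation result \cite{Fujihara24}. Its height then satisfies the quasilinear equation
\[
z_t=\frac{z_{\theta\theta}}{r^2+z_\theta^2}-f(z)\,\frac{r^2+2z_\theta^2}{r^2+z_\theta^2}.
\]
The maximum principle shows that $\max z$ is nonincreasing in $t$, and the same holds for $\max_\theta |z_\theta|/r$ (or the angle with the slices) on an ancient solution. Indeed, gradient bounds propagate forward, so the gradient must stay uniformly bounded backwards. If the slope blew up as $t\to-\infty$, I would rescale and extract a limit to get a contradiction; alternatively I would use Lemma~\ref{lem:zmax}, which presumably confines $z<a$ and shows the curve is pushed to the top. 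Since $r\le r(a)$ and $|z_\theta|$ is bounded, $L=\int\sqrt{r^2+z_\theta^2}\,d\theta$ is bounded. I expect to make the slope bound rigorous as follows. Since $r'>0$, slices below the top have $\kappa\neq 0$ and move downward (Lemma~\ref{lem:slice}). Each slice $S^1\times\{Z\}$ is therefore a barrier solution. Intersection counting with these slice solutions, by Sturm--Angenent, shows that the number of intersections with slices is nonincreasing. This controls oscillation.

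\emph{(iii)$\Rightarrow$(i) and convergence.} The key tool is the monotonicity formula $\frac{d}{dt}L=-\int\kappa^2\,ds$. Integrating over $[t,t_0]$ gives
\[
\int_{-\infty}^{t_0}\!\int\kappa^2\,ds\,dt\le L(-\infty)-L(t_0)
\]
whenever $L$ has a backward limit. Under the $o(\sqrt{-t})$ hypothesis I would instead argue on dyadic intervals $[2t,t]$. Cauchy--Schwarz gives $\int|\kappa|\,ds\le \sqrt{L}\,(\int\kappa^2)^{1/2}$, so on average the total curvature on $[2t,t]$ is at most $L(2t)/\sqrt{-t}=o(1)$. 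Hence there are times $t_k\to-\infty$ at which $\Gamma_{t_k}$ has total curvature tending to $0$. A degree-one embedded curve with small total curvature must be nearly a geodesic-like curve of constant $z$, and hence graphical with small slope. The only candidates are slices, and $r'>0$ means no closed geodesic exists in the interior. This forces $z\to a$, where $r'(a)=0$ makes $S^1\times\{a\}$ the limiting geodesic.

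Graphicality at $t_k$ then propagates forward to all $t$ by \cite{Fujihara24}, which gives (i). Applying the same argument along all times (the length is now bounded, so $\int\!\int\kappa^2<\infty$), together with the monotone height barriers, yields $C^1$ convergence to $S^1\times\{a\}$ and $L\to2\pi r(a)$.

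\emph{Main obstacle.} I expect the hardest step to be passing from small $L^2$ or total curvature at sequences of times to $C^1$ closeness to a slice near the possibly singular top end $z=a$. The difficulty is that higher derivatives of $r$ are not controlled there. I would first try a direct ODE comparison of the curve with small total curvature against slices, using only the $C^{1,1}$ bounds on $r$.
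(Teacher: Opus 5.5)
Your step (i)$\Rightarrow$(ii) has a genuine gap. A slope bound that propagates forward in time only says that $\max_\theta|z_\theta|/r$ is nonincreasing in $t$. That is compatible with $\max_\theta|z_\theta|\to\infty$ as $t\to-\infty$, so the inference ``gradient bounds propagate forward, so the gradient must stay uniformly bounded backwards'' runs the monotonicity in the wrong direction. Even the forward statement is not supplied by the maximum principle: the term $(f'-f^2)(1-\Theta^2)\Theta$ in the evolution of $\Theta$ (Lemma~\ref{lem:gph-pr}) can be negative, e.g.\ for $r=e^{-c(z-a)^2}$.

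The Sturm--Angenent count fails for the same reason. The number of intersections with a slice solution is nonincreasing forward in time, so it has no a priori upper bound as $t\to-\infty$. The rescaling alternative names no limit to contradict.

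Moreover, nothing you wrote bounds $\underline z$ from below. Even a bound on $\operatorname{osc} z$ would not bound $L=\int\sqrt{r^2+z_\theta^2}\,d\theta$, since a graph confined to a thin band can have huge total variation $\int|z_\theta|\,d\theta$. Your curvature averaging is unavailable here too, because without a growth bound on $L$ you have no control of $\int\!\int\kappa^2$.

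The missing idea is a quantity that is bounded and monotone in time and controls the total variation of the height. The paper uses the weighted area $\mathcal F(t)=\int_0^{2\pi}\int_{z}^{a}e^{\xi-a}r(\xi)\,d\xi\,d\theta\in[0,2\pi r(a)]$. The divergence form $r(z)z_t=\partial_\theta\arctan(z_\theta/r)-r'(z)$ gives $\mathcal F'\ge D:=\int_0^{2\pi}e^{z-a}z_\theta\arctan(z_\theta/r)\,d\theta\ge0$, so $D$ is integrable on $(-\infty,t_0]$. A Jensen lower bound for $D$ whenever the graph crosses a fixed band $[a_1,a_2]$, combined with $\underline z'\ge-f(\underline z)$, forces $\underline z\to a$ (Lemma~\ref{lem:zmin}). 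At times with $D(t_j)\to0$, Jensen again gives $\int|z_\theta|\,d\theta\to0$, hence $L(t_j)\to2\pi r(a)$. Monotonicity of $L$ then yields $L\le2\pi r(a)$ for all $t$ (Proposition~\ref{prop:l-gph}).

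Your (iii)$\Rightarrow$(i) is a legitimate reorganization of Proposition~\ref{prop:subsqrt}. The dyadic averaging does produce $t_k\to-\infty$ with $L(t_k)E(t_k)\to0$, and forward preservation of graphicality then finishes the argument. The paper instead argues by contradiction: it shows $LE\ge1$ at all very negative times of a nongraphical solution and integrates $(L^2)'=-2LE$.

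What you still need is a proof that small total curvature forces graphicality; closeness to ``a geodesic-like curve of constant $z$'' is not an argument. The paper uses the Killing-field quantity $\mathcal J=\langle\mathbf T,\partial_\theta\rangle=r(h)\Theta$, which satisfies $\mathcal J_s=-r(h)h_s\kappa$. It vanishes at a critical point of the angular projection and equals $\pm r(\bar z)$ at a highest point. Hence a nongraphical curve has $\int|\kappa|\,ds\ge 2r(\bar z)/r(a)$.

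Finally, for the $C^1$ convergence, $\int\!\int\kappa^2<\infty$ gives $E\to0$ only along some sequence, while you must handle every sequence $t_j\to-\infty$. The paper obtains $E(t)\to0$ from $E'\le C(E+E^2+E^3)$ on the strip (Lemma~\ref{lem:e-dec}). It then uses $W^{2,2}$ compactness and Clairaut's relation to show that the limit geodesic, which reaches height $a$, is $S^1\times\{a\}$.
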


For a graphical solution, write $z=m+u$, where $m(t)$ is the angular mean of the height.
The linearized equation for the perturbation $u$ at a slice is
\[
\partial_t u=r(m)^{-2}u_{\theta\theta}-f'(m)u.
\]
The first nonconstant Fourier mode has decay rate $r(m)^{-2}+f'(m)$.
Set
\[
\mu:=\liminf_{z\to a^-}(r(z)^{-2}+f'(z)).
\]
For the round spherical metric, $r(z)=r(a)\sin(z/r(a))$ in meridian coordinates near the equator, so $r(z)^{-2}+f'(z)\equiv0$ and $\mu=0$.

Let $K=-r''/r$ be the Gaussian curvature of $M$ (see Lemma \ref{lem:conn}), so that $f'=-K-f^2$.
If $r'(a)=0$, which is the case whenever a closed ancient solution exists, then $\mu>0$ is equivalent to
\[
\limsup_{z\to a^-}K(z)<\frac{1}{r(a)^2}.
\]

Our second main result gives slice rigidity.

\begin{theorem}\label{thm:rig}
    Assume $\mu>0$.
    Let $\{\Gamma_t\}_{t\in(-\infty,T)}$ be a smooth, closed, embedded ancient solution to curve shortening flow whose curves are not null-homotopic in $M$.
    If $L(t)$ is uniformly bounded as $t\to-\infty$, then the solution is a family of slices
    \[
    \Gamma_t=S^1\times\{Z(t)\},\qquad Z'(t)=-\frac{r'(Z(t))}{r(Z(t))},
    \]
    with $Z(t)\to a$ as $t\to-\infty$ and $L(t)=2\pi r(Z(t))<2\pi r(a)$ for every $t<T$.
\end{theorem}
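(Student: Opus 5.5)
By Theorem \ref{thm:main}, the length bound makes every $\Gamma_t$ graphical, $\Gamma_t=\{(\theta,z(\theta,t))\}$. Moreover $\Gamma_t\to S^1\times\{a\}$ in $C^1$ and $L(t)\to 2\pi r(a)$ as $t\to-\infty$. I would decompose $z=m(t)+u(\theta,t)$ with $\int_{S^1}u\,d\theta=0$. The plan is to show $u\equiv0$. Once that is known, the graph equation forces $z$ to be independent of $\theta$. The normal velocity of a slice is its geodesic curvature $r'/r$, directed toward decreasing $r$, which gives $Z'=-f(Z)$. Since $r'>0$ this yields $Z$ decreasing, $Z(t)\to a$ backward, and $L=2\pi r(Z)<2\pi r(a)$.

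First I would write the graphical equation. Up to the factor coming from the normal parametrization, the height function satisfies
\[
\partial_t z=\frac{z_{\theta\theta}}{r^2+z_\theta^2}-f(z)\,\frac{r^2+2z_\theta^2}{r^2+z_\theta^2}.
\]
I would check the exact form against Lemma \ref{lem:conn}. Averaging in $\theta$ gives $m'$, and subtracting gives the equation for $u$:
\[
\partial_t u=\frac{u_{\theta\theta}}{r(z)^2+u_\theta^2}-\bigl(f(m+u)-\overline{f(m+u)}\bigr)+Q,
\]
where $Q$ collects the terms quadratic in $u_\theta$. I would then compute
\[
\frac{d}{dt}\int u^2\,d\theta=-2\int\frac{u_\theta^2}{r^2+u_\theta^2}(1+\text{error})\,d\theta-2\int f'(\xi)\,u^2\,d\theta+\int O(u_\theta^2|u|+|u_\theta|^3)\,d\theta,
\]
where the mean-zero property of $u$ kills the average term.

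Next I would choose $b<a$ so that $r^{-2}+f'\ge\mu/2$ on $(b,a)$. By the $C^1$ convergence there is $t_0$ such that for $t<t_0$ the curve lies in $(b,a)$ and $\|u_\theta\|_\infty$ is as small as desired. Wirtinger's inequality gives $\int u_\theta^2\ge\int u^2$ for mean-zero $u$. The coefficient $r(z)^{-2}$ differs from $r(m)^{-2}$ only by $O(|u|)$ and is evaluated near $a$. Combining these, I expect the differential inequality
\[
\frac{d}{dt}\int u^2\le-\mu\int u^2 \qquad\text{for } t<t_0.
\]
Integrating from $s$ to $t$ gives $\int u^2(t)\le e^{-\mu(t-s)}\int u^2(s)$. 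As $s\to-\infty$ we have $e^{\mu s}\to 0$, while $\int u^2(s)\to0$ by the $C^1$ convergence, so the right side tends to $0$. Hence $u\equiv0$ for $t<t_0$. For the remaining times I would use uniqueness of the flow, since the slice family solves the same equation with the same data at $t_0$, to get $u\equiv0$ for all $t<T$.

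The main obstacle is making the $L^2$ estimate honest. The difficulty is not in the linear part, which is handled by $\mu>0$ and Wirtinger. It is in the fact that $f'(\xi)$ and $r(z)$ are evaluated at varying heights, and that the cubic error terms must be absorbed by $\int u_\theta^2$ using only $C^1$ smallness. The regularity is also weak: $r''$ is bounded but has no limit at $a$, so $f'$ is bounded but not continuous up to $a$. This is why $\mu$ is defined as a liminf. With $\mu$ so defined, the mean value form $f(m+u)-f(m)=f'(\xi)u$ only needs $\xi\in(b,a)$, which the $C^1$ convergence provides. I would first try controlling the cross term $\int(f(m+u)-f(m))u$ exactly this way, bounding $r(z)^{-2}-r(m)^{-2}$ by $\sup|r'|\,|u|$. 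The result would be an inequality of the form
\[
-\int u_\theta^2\bigl(r(m)^{-2}-C\|u\|_{C^1}\bigr)-\int f'(\xi)\,u^2,
\]
which is at most $-\mu\int u^2$ once $\|u\|_{C^1}$ is small.
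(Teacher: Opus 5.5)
Your proposal is correct and follows the paper's proof essentially step for step. The shared steps are: graphicality and $C^1$ convergence from Theorem \ref{thm:main}; the $L^2$ energy of $u=z-m$; Wirtinger combined with $r^{-2}+f'\ge\mu/2$ near $a$, keeping $f(m+u)-f(m)$ in mean-value form; decay forced to zero by ancientness; and uniqueness for later times.

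The step you flag as the obstacle, removing $u_{\theta\theta}$ from the error terms under only $C^1$ control, is handled in the paper by the divergence form \eqref{eq:h-div2}. One integration by parts then leaves a nonlinear contribution involving only $u$ and $u_\theta$, bounded by $C\varepsilon\langle u_\theta^2\rangle$. Your stated rate $\mu$ should really be $\mu-C\varepsilon$ (the paper uses $\mu/2$ for $\frac{1}{2\pi}\int u^2$), which is harmless.
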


By Theorem \ref{thm:main}, the length bound in Theorem \ref{thm:rig} can be replaced by graphicality at every time or by $L(t)=o(\sqrt{-t})$ as $t\to-\infty$.

\begin{example}
    Let \(r(z)=\exp(-c(z-a)^2)\) for \(z<a\), where \(c>0\).
Then \(f(z)=2c(a-z)>0\), \(r(a)=1\), and
    \[
    \mu=1-2c.
    \]
    The function $r$ is increasing and bounded in $C^2(I)$ for every $c>0$.
For $0<c<1/2$, we have $\mu>0$.
    The slices \(S^1\times\{Z(t)\}\) with \(Z(t)=a-A_0e^{2ct}\), \(A_0>0\), are ancient solutions.
\end{example}

For Theorem \ref{thm:main}, the length bound confines the flow to a fixed strip at sufficiently negative times, and decay of the curvature energy gives subsequential backward convergence to a closed geodesic, as in the proof of \cite[Corollary~1.3]{CM22}.
Clairaut's relation identifies this geodesic with $S^1\times\{a\}$, yielding graphicality and $C^1$ convergence (Proposition \ref{prop:gph}).
Conversely, a bounded monotone weighted area gives backward convergence of the height and a sequence of times with vanishing total variation of the height, which yield the length bound (Proposition \ref{prop:l-gph}).
If some $\Gamma_t$ is not graphical, then $L(t)^2$ grows at least linearly in $-t$ (Proposition \ref{prop:subsqrt}).

For Theorem \ref{thm:rig}, the assumption $\mu>0$ and the Wirtinger inequality give a positive decay rate for the $L^2$ energy of the height perturbation (Proposition \ref{prop:a-dec}).
Since this energy also tends to zero as $t\to-\infty$, the perturbation vanishes at sufficiently negative times, and uniqueness extends the slice property to all times.

\section{Preliminaries}\label{sec:pre}

\begin{lemma}\label{lem:conn}
Let $(M,g)$ be the warped product surface with metric $g=r(z)^2\,d\theta^2+dz^2$, where $r$ is smooth and positive.
The nonzero Christoffel symbols are
\begin{equation}
    \Gamma_{\theta\theta}^z=-rr',\qquad \Gamma_{\theta z}^\theta=\Gamma_{z\theta}^\theta=\frac{r'}{r}=f.
\end{equation}
The covariant derivatives of the coordinate vector fields are
\begin{equation}\label{eq:conn}
\begin{aligned}
    \nabla_{\partial_\theta}\partial_\theta&=-rr'\partial_z,\\
    \nabla_{\partial_\theta}\partial_z=\nabla_{\partial_z}\partial_\theta&=f\partial_\theta,\qquad \nabla_{\partial_z}\partial_z=0.
\end{aligned}
\end{equation}
Moreover, the Gaussian curvature is $K=-r''/r$.
\end{lemma}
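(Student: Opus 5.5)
The plan is to compute everything directly from the Koszul formula, or equivalently from the coordinate formula
\[
\Gamma^k_{ij}=\tfrac12 g^{kl}\bigl(\partial_i g_{jl}+\partial_j g_{il}-\partial_l g_{ij}\bigr),
\]
in the coordinates $(\theta,z)$. The metric components are $g_{\theta\theta}=r(z)^2$, $g_{zz}=1$, $g_{\theta z}=0$, and they depend only on $z$. Since $g_{zz}$ is constant and every component is independent of $\theta$, the only nonzero derivative of the metric is $\partial_z g_{\theta\theta}=2rr'$.

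First I will enumerate the symbols. For $\Gamma^z_{\theta\theta}$ only the last term survives, which gives $\tfrac12\cdot(-2rr')=-rr'$. For $\Gamma^\theta_{\theta z}$ we get $\tfrac12 r^{-2}\,\partial_z g_{\theta\theta}=r'/r=f$, and the symmetric symbol $\Gamma^\theta_{z\theta}$ is the same. Each of the remaining symbols, namely $\Gamma^\theta_{\theta\theta}$, $\Gamma^z_{\theta z}$, $\Gamma^\theta_{zz}$ and $\Gamma^z_{zz}$, involves either $\partial_\theta$ of a component or a derivative of $g_{zz}$ or $g_{\theta z}$, so each one vanishes. The formulas \eqref{eq:conn} then follow from $\nabla_{\partial_i}\partial_j=\Gamma^k_{ij}\partial_k$.

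For the curvature, I will compute
\[
R(\partial_\theta,\partial_z)\partial_z=\nabla_{\partial_\theta}\nabla_{\partial_z}\partial_z-\nabla_{\partial_z}\nabla_{\partial_\theta}\partial_z,
\]
using $[\partial_\theta,\partial_z]=0$. The first term vanishes because $\nabla_{\partial_z}\partial_z=0$. The second term is $-\nabla_{\partial_z}(f\partial_\theta)=-(f'+f^2)\partial_\theta$. Since $f'+f^2=r''/r$, this gives $R(\partial_\theta,\partial_z)\partial_z=-(r''/r)\,\partial_\theta$. Taking the inner product with $\partial_\theta$ gives $-r''r$, and dividing by $|\partial_\theta|^2|\partial_z|^2=r^2$ yields $K=-r''/r$.

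There is no real obstacle here. The only point needing care is the sign convention for $R$ together with the identity $f'+f^2=r''/r$. As a check, the flat cone $r=z$ gives $K=0$, and the round sphere $r=\sin z$ gives $K=1$.
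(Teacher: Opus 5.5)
Your proposal is correct and follows essentially the same route as the paper: you get the Christoffel symbols from the coordinate formula and then compute the Gaussian curvature from the curvature tensor of the coordinate fields. The only difference is that you evaluate $R(\partial_\theta,\partial_z)\partial_z$ instead of the paper's $R(\partial_z,\partial_\theta)\partial_\theta$; by the symmetries of $R$ this gives the same sectional curvature, and it is slightly shorter because $\nabla_{\partial_z}\partial_z=0$ removes one term.
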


\begin{proof}
The metric components are $g_{\theta\theta}=r^2$, $g_{zz}=1$, and $g_{\theta z}=0$.
The Christoffel formula gives
\begin{equation}\label{eq:chr}
    \Gamma_{\theta\theta}^z=-\tfrac12\partial_zg_{\theta\theta}=-rr',\qquad \Gamma_{\theta z}^\theta=\Gamma_{z\theta}^\theta=\tfrac12g^{\theta\theta}\partial_zg_{\theta\theta}=\frac{r'}{r},
\end{equation}
and the remaining symbols vanish.
The covariant derivatives follow from $\nabla_{\partial_i}\partial_j=\Gamma_{ij}^k\partial_k$.
Since $[\partial_z,\partial_\theta]=0$ and $f=r'/r$,
\begin{equation}\label{eq:rm}
\begin{aligned}
    R(\partial_z,\partial_\theta)\partial_\theta
    &=\nabla_{\partial_z}(-rr'\partial_z)-\nabla_{\partial_\theta}(f\partial_\theta)\\
    &=-(r'^2+rr'')\partial_z+rr'f\partial_z=-rr''\partial_z.
\end{aligned}
\end{equation}
Therefore,
\begin{equation}\label{eq:k}
    K=r^{-2}g(R(\partial_z,\partial_\theta)\partial_\theta,\partial_z)=-\frac{r''}{r}.
\end{equation}
\end{proof}

For a slice, the connection formulas reduce curve shortening flow to an ordinary differential equation for its height.

\begin{lemma}\label{lem:slice}
For $z_0<a$, the slice $S^1\times\{z_0\}$ evolves by curve shortening flow as $\Gamma_t=S^1\times\{Z(t)\}$, where $Z(0)=z_0$ and
\begin{equation}\label{eq:slice}
    Z'(t)=-f(Z(t)).
\end{equation}
This solution extends to all negative times if and only if $r'(a)=0$.
In this case, $Z(t)\to a$ as $t\to-\infty$.
\end{lemma}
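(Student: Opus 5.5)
We first compute the curvature of a slice directly. Parametrize $\Gamma=S^1\times\{z_0\}$ by $\theta\mapsto(\theta,z_0)$. Then $\mathbf{T}=r(z_0)^{-1}\partial_\theta$ and, by Lemma \ref{lem:conn},
\[
\nabla_{\mathbf{T}}\mathbf{T}=r^{-2}\nabla_{\partial_\theta}\partial_\theta=-\frac{r'}{r}\,\partial_z=-f(z_0)\,\partial_z .
\]
So the curvature vector of a slice is a multiple of $\partial_z$, constant along the slice. For the family $F(\theta,t)=(\theta,Z(t))$ we have $\partial_tF=Z'(t)\partial_z$, and \eqref{eq:csf} holds exactly when $Z'=-f(Z)$. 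This parametrization is normal, since its velocity is purely normal. By uniqueness of curve shortening flow for smooth closed initial curves, the solution starting at $S^1\times\{z_0\}$ is this family of slices for as long as the ODE solution stays in $I$.

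Next comes the extension question for \eqref{eq:slice} backward in time. Since $f>0$ on $I$, $Z$ is strictly decreasing in $t$, so going backward it increases toward $a$. The ODE solution exists for all $t\le0$ if and only if $Z$ does not reach $a$ in finite backward time. Set $s=-t$, so that $dZ/ds=f(Z)$. The backward time needed to go from $z_0$ to $a$ is
\[
\int_{z_0}^{a}\frac{dz}{f(z)}=\int_{z_0}^a\frac{r(z)}{r'(z)}\,dz ,
\]
and the slice solution extends to all negative times if and only if this integral diverges. Because $r$ is bounded between $r(z_0)>0$ and $r(a)$ on $[z_0,a)$, divergence is equivalent to divergence of $\int_{z_0}^a dz/r'(z)$.

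We then split into two cases.
\begin{enumerate}[label=\textup{(\alph*)}]
\item If $r'(a)>0$, then $r'$ is bounded below by a positive constant near $a$, since $r'$ extends continuously to $[z_0,a]$ by the $C^{1,1}$ extension and is positive. The integral is finite, so $Z$ reaches $a$ in finite negative time and the slice leaves $M$.
\item If $r'(a)=0$, we use that $r''$ is bounded, $|r''|\le C$. This gives $r'(z)=r'(z)-r'(a)\le C(a-z)$, hence $1/r'(z)\ge 1/(C(a-z))$. The integral then diverges logarithmically, so the solution exists for all $t<0$.
\end{enumerate}
This Lipschitz estimate on $r'$ at $a$ is the key step and the main point of the argument. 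Note that $r'(a)\ge0$ automatically, since $r'>0$.

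Finally, we identify the limit. Since $Z$ is monotone and bounded above by $a$, the limit $Z_\infty=\lim_{t\to-\infty}Z(t)\le a$ exists. If $Z_\infty<a$, then $f\ge c>0$ on $[z_0,Z_\infty]$, so $Z'\le -c$ and $Z(t)\ge z_0-ct\to\infty$ as $t\to-\infty$, which contradicts boundedness. Hence $Z(t)\to a$. The only remaining care is justifying uniqueness, so that the slice family is the flow; for this we cite the standard uniqueness theorem for curve shortening flow of compact curves.
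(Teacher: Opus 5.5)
Your proposal is correct and follows essentially the same route as the paper: the same curvature computation gives $Z'=-f(Z)$, and the same backward-time criterion $\int_{z_0}^a d\zeta/f(\zeta)=\infty$ is split into the case $r'(a)>0$ and the Lipschitz bound $r'(z)\le C(a-z)$ from bounded $r''$. Your added uniqueness remark and your monotone-limit argument for $Z(t)\to a$, in place of the paper reading it off the integral identity, are only minor presentational differences.
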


\begin{proof}
For a slice, $\mathbf T=r(Z)^{-1}\partial_\theta$, and the connection formula in Lemma \ref{lem:conn} gives
\begin{equation}
    \nabla_{\mathbf T}\mathbf T=-\frac{r'(Z)}{r(Z)}\partial_z.
\end{equation}
The parametrization $F(\theta,t)=(\theta,Z(t))$ has normal velocity $Z'(t)\partial_z$, so \eqref{eq:csf} gives \eqref{eq:slice}.

Separating variables in \eqref{eq:slice} and using $Z(0)=z_0$, we obtain $-t=\int_{z_0}^{Z(t)}\frac{d\zeta}{f(\zeta)}$.
Set
\begin{equation}
    \tau_a:=\int_{z_0}^a\frac{d\zeta}{f(\zeta)}.
\end{equation}
Thus the solution extends to all negative times if and only if $\tau_a=\infty$.

If $r'(a)>0$, then $f(z)\to r'(a)/r(a)>0$, so $\tau_a<\infty$.
If $r'(a)=0$, boundedness of $r''$ gives $0<r'(z)\le C(a-z)$ for $z_0\le z<a$.
Since $r(z)\ge r(z_0)>0$, there is a constant $C_0>0$ such that
\begin{equation}
    0<f(z)\le C_0(a-z),\qquad \tau_a\ge\frac1{C_0}\int_{z_0}^a\frac{d\zeta}{a-\zeta}=\infty.
\end{equation}

In this case, the integral identity also gives $Z(t)\to a$ as $t\to-\infty$ since the integral is finite for every upper endpoint below $a$.
\end{proof}

\begin{figure}
    \centering
    \includegraphics[width=0.8\linewidth]{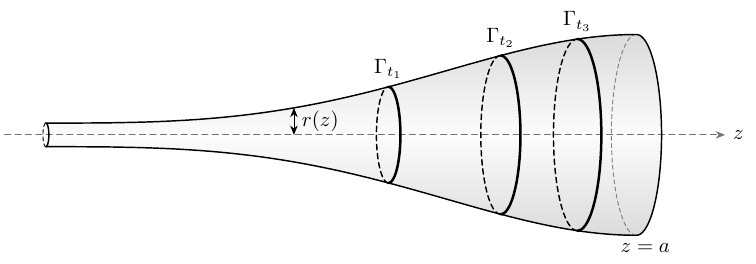}
    \caption{The slices $\Gamma_t=S^1\times\{Z(t)\}$ of Lemma \ref{lem:slice} at times $t_1>t_2>t_3$.}
    \label{fig:slice}
\end{figure}

The condition $r'(a)=0$ is equivalent to the geodesic curvature of the slices tending to zero at the upper end (see Figure \ref{fig:slice}).

Let $\pi_I:M\to I$ be the projection given by $\pi_I(\theta,z)=z$.
Let $F(x,t)$ be a normal parametrization satisfying \eqref{eq:csf}, where $x\in S^1$.
Write $h(x,t)=\pi_I(F(x,t))$, with $h_t$ taken at fixed $x$.
Let $s$ denote arclength at each fixed time, so $\partial_s=|\partial_xF|_g^{-1}\partial_x$, and let $\mathbf T=\partial_sF$.
Choose the unit normal $\mathbf{N}$ so that $(\mathbf T,\mathbf{N})$ has the same orientation as the orthonormal frame $(r^{-1}\partial_\theta,\partial_z)$.
Set
\begin{equation}
    \Theta:=\langle\mathbf{N},\partial_z\rangle.
\end{equation}
Write $\Delta=\partial_s^2$ for the intrinsic Laplacian along $\Gamma_t$.
The function $\Theta$ may have either sign.

Using a local lift $\vartheta$ of the angular coordinate, write
\begin{equation}
    F(x,t)=(\vartheta(x,t),h(x,t)).
\end{equation}

\begin{lemma}\label{lem:height}
In the normal parametrization, the height function $h$ satisfies
\begin{equation}\label{eq:h-norm}
    h_t=h_{ss}-f(h)\Theta^2.
\end{equation}
\end{lemma}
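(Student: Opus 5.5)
We view $h=z\circ F$ as the composition of the coordinate function $z$ with the curve and use the second-variation formula for functions along a curve. For a smooth function $\phi$ on $M$ and a normal parametrization, the chain rule gives
\[
h_t=\langle\nabla z,F_t\rangle=\kappa\langle \partial_z,\mathbf N\rangle=\kappa\Theta,
\]
since $\nabla z=\partial_z$ (because $g_{zz}=1$ and $g_{\theta z}=0$). Along the curve at fixed time,
\[
h_{ss}=\partial_s\langle\partial_z,\mathbf T\rangle=\langle\nabla_{\mathbf T}\partial_z,\mathbf T\rangle+\langle\partial_z,\nabla_{\mathbf T}\mathbf T\rangle
=\operatorname{Hess}z(\mathbf T,\mathbf T)+\kappa\Theta .
\]
Subtracting the two identities reduces the claim to showing $\operatorname{Hess}z(\mathbf T,\mathbf T)=f(h)\Theta^2$.

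To compute the Hessian, we use Lemma \ref{lem:conn}. Write $\mathbf T=\alpha r^{-1}\partial_\theta+\beta\partial_z$ with $\alpha^2+\beta^2=1$. Then $\nabla_{\partial_z}\partial_z=0$ and $\nabla_{\partial_\theta}\partial_z=f\partial_\theta$ give
\[
\operatorname{Hess}z(\mathbf T,\mathbf T)=\langle\nabla_{\mathbf T}\partial_z,\mathbf T\rangle=\alpha^2 r^{-2}\langle f\partial_\theta,\partial_\theta\rangle=f\alpha^2 .
\]
Since $(\mathbf T,\mathbf N)$ is positively oriented with respect to $(r^{-1}\partial_\theta,\partial_z)$, we have $\mathbf N=-\beta r^{-1}\partial_\theta+\alpha\partial_z$. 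Hence $\Theta=\alpha$, and therefore $\operatorname{Hess}z(\mathbf T,\mathbf T)=f(h)\Theta^2$, evaluated at height $h$.

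Combining the pieces yields $h_t=h_{ss}-f(h)\Theta^2$. The only points needing care are the following.
\begin{itemize}
\item The sign convention $\kappa\mathbf N=\nabla_{\mathbf T}\mathbf T$ is used consistently in both identities.
\item Since $\partial_s$ and $\partial_t$ do not commute in general, we never differentiate $h_s$ in $t$. The proof uses only a time derivative at fixed $x$ and a spatial second derivative at fixed $t$.
\end{itemize}
Consequently there is no real obstacle; the main step is the Hessian computation, and it is immediate from Lemma \ref{lem:conn}.
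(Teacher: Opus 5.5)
Your proof is correct and follows the paper's argument essentially verbatim. Like the paper, you split $h_{ss}=\operatorname{Hess}_M\pi_I(\mathbf T,\mathbf T)+\kappa\Theta$, evaluate the Hessian term as $f(h)\Theta^2$ via Lemma \ref{lem:conn} (your $\alpha$ is the paper's $r(h)\vartheta_s$), and use $h_t=\kappa\Theta$ in the normal parametrization.
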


\begin{proof}
In a local angular lift, write
\begin{equation}
    \mathbf{T}=\vartheta_s\partial_\theta+h_s\partial_z,\qquad r(h)^2\vartheta_s^2+h_s^2=1.
\end{equation}
The compatible unit normal is
\begin{equation}\label{eq:n-fr}
    \mathbf{N}=-\frac{h_s}{r(h)}\partial_\theta+r(h)\vartheta_s\partial_z,\qquad \Theta=r(h)\vartheta_s.
\end{equation}
Since $\nabla\pi_I=\partial_z$, Lemma \ref{lem:conn} gives
\begin{equation}
    \operatorname{Hess}_M\pi_I(\mathbf{T},\mathbf{T})=\langle\nabla_\mathbf{T}\partial_z,\mathbf{T}\rangle=f(h)r(h)^2\vartheta_s^2=f(h)\Theta^2.
\end{equation}
Using $\nabla_\mathbf{T}\mathbf{T}=\kappa\mathbf{N}$, we obtain
\begin{equation}
    h_{ss}=\operatorname{Hess}_M\pi_I(\mathbf{T},\mathbf{T})+\langle\nabla_\mathbf{T}\mathbf{T},\partial_z\rangle=f(h)\Theta^2+\kappa\Theta.
\end{equation}
In a normal parametrization, $h_t=\kappa\Theta$, which proves \eqref{eq:h-norm}.
\end{proof}

At points where the height attains its maximum or minimum, the height equation \eqref{eq:h-norm} gives differential inequalities for the extremal heights.

Define the maximum and minimum heights (see Figure \ref{fig:ext}) by
\begin{equation}
    \bar z(t):=\max_{\Gamma_t}\pi_I,\qquad \underline z(t):=\min_{\Gamma_t}\pi_I.
\end{equation}

\begin{lemma}\label{lem:ext}
For any smooth positive warping function $r$, without assuming graphicality, the height extrema satisfy
\begin{equation}\label{eq:ext}
    \bar{z}'(t)\le -f(\bar z(t)),\qquad \underline{z}'(t)\ge -f(\underline{z}(t))\quad\text{for a.e. }t.
\end{equation}
\end{lemma}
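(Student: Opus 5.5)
The plan is to apply the standard Hamilton maximum principle argument for the function $\bar z(t)=\max_{x}h(x,t)$ to the height equation \eqref{eq:h-norm} of Lemma \ref{lem:height}. The minimum is handled symmetrically.

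\emph{Regularity of $\bar z$.} Since $h$ is smooth on $S^1\times(-\infty,T)$ and $S^1$ is compact, $\bar z$ is locally Lipschitz. Hence it is differentiable for a.e.\ $t$. Moreover, Hamilton's lemma gives, at every such $t$, $\bar z'(t)=h_t(x_0,t)$ for any point $x_0$ at which $h(\cdot,t)$ attains its maximum. To verify this directly, fix such an $x_0$. Then $\bar z(t+\delta)\ge h(x_0,t+\delta)$ for all small $\delta$. Dividing by $\delta>0$ and by $\delta<0$ gives the two one-sided inequalities, and these force equality at points of differentiability.

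\emph{First- and second-order conditions.} At a maximum point $x_0$ of $h(\cdot,t)$ we have $h_s=0$ and $h_{ss}\le0$; here $\partial_s$ is a positive multiple of $\partial_x$, so the critical point conditions transfer. From $r(h)^2\vartheta_s^2+h_s^2=1$ and $\Theta=r(h)\vartheta_s$ in \eqref{eq:n-fr}, the condition $h_s=0$ yields $\Theta^2=1$ at $x_0$. Substituting into \eqref{eq:h-norm} gives
\[
\bar z'(t)=h_t(x_0,t)=h_{ss}(x_0,t)-f(\bar z(t))\le -f(\bar z(t)).
\]
At a minimum point we instead have $h_{ss}\ge0$ and again $\Theta^2=1$, which gives $\underline z'(t)\ge -f(\underline z(t))$.

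\emph{Main obstacle.} No real obstacle is expected. The only step needing care is justifying the derivative formula for $\bar z$ in the a.e.\ sense. It also matters that the argument uses no graphicality: the lift $\vartheta$ is only needed locally near $x_0$, and $\Theta^2=1$ follows purely from $h_s=0$ together with the unit-speed relation. Neither sign assumption on $r'$ nor boundedness is used, consistent with the statement holding for any smooth positive $r$.
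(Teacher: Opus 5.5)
Your proposal is correct and follows essentially the same argument as the paper. You use local Lipschitz continuity of $\bar z$, the identity $\bar z'(t_*)=h_t(x_*,t_*)$ at differentiability points (because $\bar z-h(x_*,\cdot)$ is nonnegative and vanishes at $t_*$), and then $h_s=0$, $h_{ss}\le 0$ and $\Theta^2=1$ in \eqref{eq:h-norm}, with the minimum handled symmetrically.
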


\begin{proof}
On every compact time interval, $h_t$ is uniformly bounded, so $\bar z$ is Lipschitz there and differentiable almost everywhere.
Fix a time $t_*$ at which $\bar z$ is differentiable, and choose $x_*\in S^1$ with $h(x_*,t_*)=\bar z(t_*)$.
With $x_*$ held fixed, the function $\tau\mapsto\bar z(\tau)-h(x_*,\tau)$ is nonnegative and vanishes at $t_*$, so $\bar z'(t_*)=h_t(x_*,t_*)$.
At $(x_*,t_*)$, we have $h_s=0$ and $h_{ss}\le0$, and the unit-speed identity gives $\Theta^2=1$.
Thus \eqref{eq:h-norm} yields $\bar z'(t_*)\le-f(\bar z(t_*))$.
The inequality for $\underline z$ follows by the same argument, using $h_{ss}\ge0$ at a minimum.
\end{proof}

\begin{figure}
    \centering
    \includegraphics[width=0.8\linewidth]{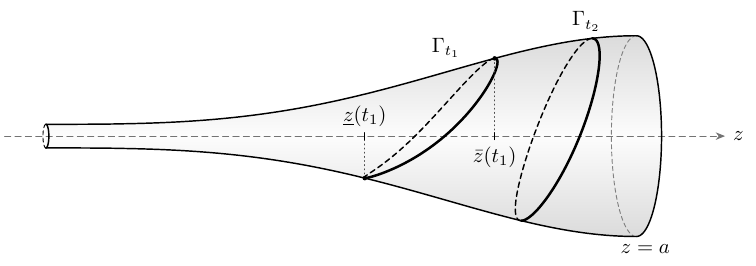}
    \caption{The height extrema $\underline z(t_1)$ and $\bar z(t_1)$ of $\Gamma_{t_1}$, and the curve $\Gamma_{t_2}$ at a time $t_2<t_1$.}
    \label{fig:ext}
\end{figure}

The inequality for the maximum height shows that the condition $r'(a)=0$ of Lemma \ref{lem:slice} holds whenever a closed ancient solution exists, and that the maximum height of such a solution tends to $a$ as $t\to-\infty$.

\begin{lemma}\label{lem:zmax}
The existence of a smooth closed ancient solution implies
\begin{equation}\label{eq:zmax}
    \int_{z_0}^a\frac{d\zeta}{f(\zeta)}=\infty,\qquad r'(a)=0,\qquad \bar{z}(t)\to a\quad(t\to-\infty),
\end{equation}
for any fixed $z_0<a$.
\end{lemma}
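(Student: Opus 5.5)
The plan is to compare the maximum height $\bar z(t)$ with the slice ODE \eqref{eq:slice} backward in time, using Lemma \ref{lem:ext}. Fix $t_0<T$ and set $z_0:=\bar z(t_0)<a$; since the solution lies in $M$, we have $\bar z(t)<a$ for all $t$. Lemma \ref{lem:ext} gives $\bar z'(t)\le -f(\bar z(t))<0$ for a.e. $t$. Because $\bar z$ is locally Lipschitz, it is absolutely continuous on compact intervals, so $\bar z$ is strictly decreasing in $t$ and hence increases as $t$ decreases. Define $G(z):=\int_{z_0}^{z}d\zeta/f(\zeta)$ for $z\in[z_0,a)$. This is well defined because $f>0$ and $f$ is continuous on $[z_0,z]$. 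Then $G\circ\bar z$ is absolutely continuous on compact subintervals of $(-\infty,t_0]$, and by the chain rule
\begin{equation*}
\frac{d}{dt}G(\bar z(t))=\frac{\bar z'(t)}{f(\bar z(t))}\le -1 \quad\text{for a.e. } t.
\end{equation*}
Integrating from $t$ to $t_0$ gives $G(\bar z(t))\ge t_0-t$ for all $t\le t_0$.

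The three conclusions then follow in order. Since $\bar z(t)<a$ and $G$ is increasing, $G(\bar z(t))\le \int_{z_0}^a d\zeta/f$. Letting $t\to-\infty$ forces $\int_{z_0}^a d\zeta/f=\infty$. Changing $z_0$ only adds a finite integral of the continuous positive function $1/f$ over a compact interval, so the same divergence holds for every fixed $z_0<a$. Next, suppose $r'(a)>0$. Then $f(z)\to r'(a)/r(a)>0$, so $1/f$ is bounded near $a$ and the integral is finite, a contradiction, exactly as in Lemma \ref{lem:slice}. Since $r'>0$ forces $r'(a)\ge 0$, we get $r'(a)=0$.

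Finally, $\bar z$ is monotone and bounded by $a$, so it has a limit $\ell\le a$ as $t\to-\infty$. If $\ell<a$, then $G(\bar z(t))\le G(\ell)<\infty$, which contradicts $G(\bar z(t))\ge t_0-t\to\infty$. Hence $\bar z(t)\to a$.

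The only delicate point is justifying the chain rule and integration for the merely a.e.-differentiable $\bar z$. I would handle this through local Lipschitz continuity, which Lemma \ref{lem:ext} already provides, together with the fact that $G$ is $C^1$ on $[z_0,\bar z(t)]$ for each fixed $t$. Composition of a $C^1$ function with an absolutely continuous function is absolutely continuous, so no further issues arise.
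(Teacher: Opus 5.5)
Your proposal is correct and follows essentially the same argument as the paper: you integrate $\bar z'\le -f(\bar z)$ from Lemma~\ref{lem:ext} against $1/f$ to get $\int_{\bar z(t_0)}^{\bar z(t)}d\zeta/f(\zeta)\ge t_0-t$. From this you read off the divergence of the integral and $\bar z(t)\to a$, and then $r'(a)=0$ via the integral criterion of Lemma~\ref{lem:slice}. Your added details, namely strict monotonicity of $\bar z$ and absolute continuity of $G\circ\bar z$ on compact intervals, make explicit the justification the paper compresses into ``since $\bar z$ is locally Lipschitz and $f>0$.''
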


\begin{proof}
Fix $t_0<T$.
Since $\bar z$ is locally Lipschitz and $f>0$, integrating \eqref{eq:ext} gives
\begin{equation}\label{eq:g-lb}
    \int_{\bar z(t_0)}^{\bar z(t)}\frac{d\zeta}{f(\zeta)}\ge t_0-t\qquad(t<t_0).
\end{equation}
The right-hand side tends to infinity as $t\to-\infty$.
Since the integral is finite for every upper limit below $a$, this implies $\bar z(t)\to a$ and $\int_{\bar z(t_0)}^a\frac{d\zeta}{f(\zeta)}=\infty$.
Changing the lower limit to any $z_0<a$ adds only a finite quantity.
Finally, the integral criterion in the proof of Lemma~\ref{lem:slice} gives $r'(a)=0$.
\end{proof}

The length satisfies
\begin{equation}\label{eq:l-evol}
    L'(t)=-\int_{\Gamma_t}\kappa^2\,ds.
\end{equation}
Indeed, in a normal parametrization, $\partial_tds=-\kappa^2ds$, and integration over the closed curve proves \eqref{eq:l-evol}.
Thus $L$ is nonincreasing.
If $L(t)\le\Lambda$ for all $t\le t_0<T$, then $L(t)\le L(t_0)\le\Lambda$ for $t_0<t<T$ as well.

\section{Length bounds and graphicality}\label{sec:lg}

In this section, we prove Theorem \ref{thm:main}.

\subsection{Graphicality from length control}\label{sec:lgph}

In Lemmas \ref{lem:strip}--\ref{lem:geod} and Proposition \ref{prop:gph}, we additionally assume that the ancient solution satisfies
\begin{equation}\label{eq:l-bdd}
    L(t)\le \Lambda\qquad \text{for all }t\le t_0,
\end{equation}
for some \(t_0<T\) and \(\Lambda<\infty\).

\begin{lemma}\label{lem:strip}
    For every smooth closed curve \(\Gamma\subset M\),
    \begin{equation}\label{eq:osc}
        \operatorname{osc}_\Gamma z
        :=
        \max_{\Gamma}z - \min_{\Gamma}z
        \le \frac{L(\Gamma)}{2}.
    \end{equation}

    Let $\{\Gamma_t\}_{t<T}$ be an ancient solution whose curves are not null-homotopic in $M$, and suppose that $L(t)\le\Lambda$ for all $t\le t_0<T$.
    Then there exist $b<a$ and $t_1\le t_0$ such that
    \begin{equation}\label{eq:strip}
        \Gamma_t\subset S^1\times [b,a),\quad 0<L_*:=2\pi r(b)\le L(t)\le \Lambda \qquad \text{for all }t\le t_1.
    \end{equation}
\end{lemma}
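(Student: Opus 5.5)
The oscillation bound \eqref{eq:osc} is elementary. Choose points $p,q\in\Gamma$ where $z$ attains its maximum and minimum. They split the closed curve $\Gamma$ into two arcs, each joining $p$ to $q$. Along any arc, $|dz/ds|=|h_s|\le 1$, because $r(h)^2\vartheta_s^2+h_s^2=1$. Hence each arc has length at least $\max z-\min z$. Adding the two arcs gives $L(\Gamma)\ge 2\operatorname{osc}_\Gamma z$.

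For the strip statement, we combine this with Lemma~\ref{lem:zmax}. That lemma gives $\bar z(t)\to a$ as $t\to-\infty$, so there is $t_1\le t_0$ with $\bar z(t)\ge a-1$ for all $t\le t_1$. For these times \eqref{eq:osc} and the length bound give
\[
\underline z(t)\ge \bar z(t)-\Lambda/2\ge a-1-\Lambda/2=:b .
\]
Thus $\Gamma_t\subset S^1\times[b,a)$ for all $t\le t_1$. The upper bound $L(t)\le\Lambda$ holds by hypothesis.

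For the lower bound $L(t)\ge 2\pi r(b)$, we use that $\Gamma_t$ is not null-homotopic and is oriented so that its projection to $S^1$ has degree one. In a local lift, $\vartheta$ therefore increases by $2\pi$ around the curve, so
\[
L(t)=\int_{\Gamma_t}\sqrt{r(h)^2\vartheta_s^2+h_s^2}\,ds\ge\int_{\Gamma_t} r(h)|\vartheta_s|\,ds\ge r(b)\left|\int_{\Gamma_t}\vartheta_s\,ds\right|=2\pi r(b).
\]
Here $r(h)\ge r(b)$ because $r$ is increasing and $h\ge b$ on $\Gamma_t$. Finally $r(b)>0$ since $r$ is positive.

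No step is a real obstacle. The only points needing care are the following.
\begin{itemize}
\item One must use Lemma~\ref{lem:zmax}, not merely the length bound, to keep the strip away from $-\infty$.
\item The degree-one normalization is what yields the winding number $2\pi$.
\end{itemize}
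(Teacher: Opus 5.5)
Your proposal is correct and follows the paper's proof step for step: the $|h_s|\le 1$ arc argument for \eqref{eq:osc}, then Lemma~\ref{lem:zmax} with \eqref{eq:osc} to get $b=a-1-\Lambda/2$, then the lifted-angle estimate $L(t)\ge\int r(h)|\vartheta_s|\,ds\ge 2\pi r(b)$. The only difference is cosmetic: you use the degree-one normalization directly, whereas the paper only needs a nonzero winding $2\pi k$ with $|k|\ge1$, which in fact does not require embeddedness.
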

\begin{proof}
    Let \(s\) denote arclength and write \(h=z|_\Gamma\).
    Since \(\nabla z=\partial_z\) has unit length,
    \[  |h_s|=|\langle\mathbf{T},\partial_z\rangle|\le 1.
    \]
    Choose points where \(h\) attains its maximum and minimum.
    The shorter of the two arcs joining them has length at most \(L(\Gamma)/2\).
    Integrating \(|h_s|\le 1\) along this arc proves \eqref{eq:osc}.

    By Lemma \ref{lem:zmax}, choose \(t_1\le t_0\) so that \(\bar{z}(t)\ge a-1\) for every \(t\le t_1\).
    Then \eqref{eq:osc} gives \(\underline{z}(t)\ge a-1-\Lambda/2=:b\) on this interval.

    For the lower length bound, fix \(t\le t_1\) and write a unit-speed parametrization of \(\Gamma_t\) as \(\gamma(s)=(\vartheta(s),h(s))\), where \(\vartheta:[0,L(t)]\to\mathbb R\) is a lift of the angular coordinate.
    Since $\Gamma_t$ is not null-homotopic in $M$,
    \[
    \vartheta(L(t))-\vartheta(0)=2\pi k\qquad \text{for some }k\in \mathbb{Z}\setminus\{0\}.
    \]
    The identity \(h_s^2+r(h)^2\vartheta_s^2=1\) gives
    \begin{equation*}
        \begin{split}
            L(t)&\ge\int_{\Gamma_t}r(h)|\vartheta_s|\,ds\ge r(b)\int_{\Gamma_t}|\vartheta_s|\,ds\\
            &\ge r(b)\left| \int_{\Gamma_t}\vartheta_s\,ds \right|=2\pi r(b)|k|\ge 2\pi r(b).
        \end{split}
    \end{equation*}
    This proves \eqref{eq:strip}.
\end{proof}

On the strip given by Lemma \ref{lem:strip}, the Gaussian curvature of $M$ is bounded and $L(t)\ge L_*$.
These bounds give a differential inequality for the curvature energy.

\begin{lemma}\label{lem:e-dec}
    Under \eqref{eq:l-bdd}, the curvature energy satisfies
    \begin{equation}\label{eq:e-dec}
    E(t):=\int_{\Gamma_t}\kappa^2\,ds\to 0\qquad \text{as }t\to-\infty.
    \end{equation}
\end{lemma}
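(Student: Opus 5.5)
The approach combines two facts: $E$ is integrable on $(-\infty,t_1]$, and $E$ satisfies a differential inequality that prevents it from growing too fast. By \eqref{eq:l-evol}, for $t\le t_1$,
\[
\int_{t}^{t_1}E(\tau)\,d\tau=L(t)-L(t_1)\le\Lambda-L_*,
\]
using Lemma \ref{lem:strip}. Hence $E\in L^1((-\infty,t_1])$, and there is a sequence $t_j\to-\infty$ with $E(t_j)\to0$. Integrability alone does not give $E(t)\to0$, so the main work is a differential inequality for $E$ that holds uniformly on the strip $S^1\times[b,a)$.

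Next I will compute the evolution of $\kappa$ under \eqref{eq:csf} on a surface:
\[
\kappa_t=\kappa_{ss}+\kappa^3+K\kappa ,
\]
together with $\partial_t\,ds=-\kappa^2ds$. These give
\[
E'=-2\int\kappa_s^2\,ds+\int\kappa^4\,ds+2\int K\kappa^2\,ds .
\]
By Lemma \ref{lem:strip} the curve stays in the strip, and there $|K|=|r''/r|\le C_K$ because $r''$ is bounded and $r\ge r(b)$. So the last term is at most $2C_KE$.

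To control $\int\kappa^4$, I will use the standard interpolation on a closed curve of length $L\in[L_*,\Lambda]$:
\[
\|\kappa\|_\infty^2\le \frac{E}{L}+2\|\kappa\|_2\|\kappa_s\|_2 ,
\]
which follows by writing $\kappa^2(s)-\kappa^2(s')=\int 2\kappa\kappa_s$ and averaging. This gives
\[
\int\kappa^4\le\|\kappa\|_\infty^2E\le \frac{E^2}{L_*}+2E^{3/2}\|\kappa_s\|_2\le \frac{E^2}{L_*}+\int\kappa_s^2+E^3 .
\]
Consequently
\[
E'\le C(E+E^2+E^3)
\]
for $t\le t_1$, with $C$ depending only on $C_K$ and $L_*$. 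The lower length bound $L_*>0$ from Lemma \ref{lem:strip} is exactly what makes this constant uniform.

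Finally, I will close by an ODE comparison. Fix $\varepsilon\in(0,1]$ and take $j$ large with $E(t_j)<\varepsilon$. On any interval after $t_j$ where $E\le 1$, we have $E'\le 3CE$. I want to show $E(t)\le 2\varepsilon$ for all $t\le t_1$ sufficiently negative. Suppose instead that $E(t)>2\varepsilon$ at some time $t$. Then there is a preceding $t_j<t$ with $E(t_j)<\varepsilon$ and $t-t_j$ bounded; by integrability we can choose the $t_j$ so that consecutive ones are at most distance $1$ apart, since $\int_{t-1}^{t}E\to0$ forces small values in every unit interval. Growth from below $\varepsilon$ to above $2\varepsilon$ under $E'\le 3CE$ takes time at least $\log 2/(3C)$. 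During that time $E\ge\varepsilon$, so $\int E\ge \varepsilon\log2/(3C)$ over a window near $t$. This contradicts $\int_{t-1}^{t}E\,d\tau\to0$ as $t\to-\infty$, which holds because the tail integrals of an $L^1$ function vanish.

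Therefore $E(t)\to0$. The main obstacle is the interpolation step: the $\kappa^4$ term must be absorbed uniformly using only the length bounds, and this relies essentially on $L\ge L_*$.
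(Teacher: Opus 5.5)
Your proposal is correct and follows the paper's proof essentially step for step: the same evolution identity for $E$, the same $L^\infty$ interpolation using $L\ge L_*$ to reach $E'\le C(E+E^2+E^3)$, and the same contradiction between a fixed-length time window on which $E$ stays bounded below and the vanishing tail integrals of $E$. The paper runs the crossing argument backward, from $\varepsilon$ down to $\varepsilon/2$ on $[t_j-\delta_\varepsilon,t_j]$, while you run it forward from $\varepsilon$ to $2\varepsilon$ inside unit windows; your only slip is that $E'\le 3CE$ on the crossing interval needs $2\varepsilon\le 1$, so restrict to $\varepsilon\le 1/2$, or use $E'\le C(1+2\varepsilon+4\varepsilon^2)E$ there.
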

\begin{proof}
    Let \(b\) and \(t_1\) be as in Lemma \ref{lem:strip}.
    For \(t\le t_1\), use a normal parametrization of \(\Gamma_t\).
    Since \(K=-r''/r\), the boundedness of $r''$ and \(r\ge r(b)>0\) imply that \(|K|\le K_0\) there.
    The evolution equations (see \cite[Lemmas~1.1--1.2]{Gage90})
    \[
    \partial_t\kappa=\kappa_{ss}+\kappa^3+K\kappa,\qquad \partial_t ds=-\kappa^2 ds
    \]
   give, after integration by parts on the closed curve,
   \begin{equation}\label{eq:e-evol}
       E'(t)=-2\int_{\Gamma_t}\kappa_s^2\,ds+\int_{\Gamma_t}\kappa^4\,ds+2\int_{\Gamma_t}K\kappa^2\,ds.
   \end{equation}
Since $|K|\le K_0$, \eqref{eq:e-evol} gives
\begin{equation}\label{eq:e-ub}
    \begin{split}
        E'(t)\le -2\int_{\Gamma_t}\kappa_s^2\,ds+\int_{\Gamma_t}\kappa^4\,ds+2K_0E(t).
    \end{split}
\end{equation}

For the following spatial estimates, fix $t\le t_1$ and write $L=L(t)$.
All norms are taken on $\Gamma_t$ with respect to $ds$.
Define $\overline{\kappa}:=L^{-1}\int_{\Gamma_t}\kappa\,ds$ and choose $P_0\in\Gamma_t$ such that $\kappa(P_0)=\overline{\kappa}$.
For any $P\in\Gamma_t$, integration along an arc from $P_0$ to $P$ gives
\begin{equation*}
    \begin{split}
        \kappa(P)^2-\overline{\kappa}^2
        =2\int_{P_0}^P\kappa\kappa_s\,ds
        \le 2\int_{\Gamma_t}|\kappa||\kappa_s|\,ds.
    \end{split}
\end{equation*}
By Cauchy--Schwarz and $\overline{\kappa}^2\le L^{-1}\norm{\kappa}_2^2$, we obtain
\[
\norm{\kappa}_{\infty}^2\le \frac{1}{L}\norm{\kappa}_2^2+2\norm{\kappa}_2\norm{\kappa_s}_2.
\]
Using $L\ge L_*$, we obtain
\begin{equation*}
    \begin{split}
\int_{\Gamma_t}\kappa^4\,ds
&\le \norm{\kappa}_{\infty}^2E(t)
\le L_*^{-1}E(t)^2+2E(t)^{3/2}\norm{\kappa_s}_2\\
&\le \norm{\kappa_s}_2^2+L_*^{-1}E(t)^2+E(t)^3.
    \end{split}
\end{equation*}
Substituting this into \eqref{eq:e-ub} yields
\begin{equation}\label{eq:e-poly}
    \begin{split}
        E'(t)\le C(E(t)+E(t)^2+E(t)^3), \qquad t\le t_1,
    \end{split}
\end{equation}
where \(C=C(K_0,L_*)>0\) is independent of \(t\).

The length identity \(L'(t)=-E(t)\) and \eqref{eq:l-bdd} imply that the finite limit
\[
L_{-\infty}:=\lim_{t\to-\infty}L(t)
\]
exists and that
\begin{equation}\label{eq:e-int}
    \int_{-\infty}^{t_1}E(t)\,dt=L_{-\infty}-L(t_1)<\infty.
\end{equation}
Suppose that \eqref{eq:e-dec} fails.
Choose $\varepsilon>0$ and $t_j\le t_1$ with $t_j\to-\infty$ and $E(t_j)\ge\varepsilon$.
Set
\[
\delta_\varepsilon:=\frac{1}{2C}\int_{\varepsilon/2}^{\varepsilon}\frac{dv}{v+v^2+v^3}>0.
\]
Inequality \eqref{eq:e-poly} implies
\begin{equation}\label{eq:e-lb}
    E(t)\ge \frac\varepsilon2\qquad \text{for }t\in[t_j-\delta_\varepsilon,t_j].
\end{equation}
Indeed, if the energy were smaller than \(\varepsilon/2\) somewhere in this interval, continuity would give a subinterval on which it goes from \(\varepsilon/2\) to \(\varepsilon\) while remaining between these two values.
Dividing \eqref{eq:e-poly} by \(C(E+E^2+E^3)\) and integrating on this subinterval shows that its length is at least \(2\delta_\varepsilon\), a contradiction.
It follows from \eqref{eq:e-lb} that
\[
\int_{t_j-\delta_\varepsilon}^{t_j}E(t)\,dt\ge \frac{\varepsilon}{2}\delta_\varepsilon.
\]
The left-hand side tends to zero by \eqref{eq:e-int}, which is impossible.
Therefore \(E(t)\to 0\) as \(t\to-\infty\).
\end{proof}

The strip and length bounds, together with the decay of the curvature energy, give subsequential $C^1$ convergence of normalized arclength parametrizations to a closed geodesic.
We include the standard compactness argument for the reader's convenience.

\begin{lemma}\label{lem:c1}
    Under \eqref{eq:l-bdd}, every sequence \(t_j\to -\infty\) admits a subsequence and positively oriented normalized arclength parametrizations which converge in \(C^1\) to an immersed closed geodesic \(\gamma_\infty\) in \(S^1\times [b,a]\).
    The limiting curve has angular degree \(1\) and satisfies
    \begin{equation}\label{eq:zmax-a}
        \max_{\gamma_\infty}z=a.
    \end{equation}
\end{lemma}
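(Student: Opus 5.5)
We use compactness of the normalized arclength parametrizations on the compact strip $S^1\times[b,a]$, with the curvature energy decay of Lemma \ref{lem:e-dec} forcing the limit to be a geodesic.

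\textbf{Compactness.} Fix $t_j\to-\infty$ with $t_j\le t_1$. Parametrize $\Gamma_{t_j}$ by $\gamma_j:[0,1]\to M$ with constant speed $L_j:=L(t_j)\in[L_*,\Lambda]$, positively oriented so that the angular degree is $1$, and with $\gamma_j(0)$ chosen at a point where $h$ attains $\bar z(t_j)$. Since $|\gamma_j'|=L_j$ is bounded, the maps are equi-Lipschitz into the compact set $S^1\times[b,a]$, where $r$ extends as a $C^{1,1}$ function. Writing the geodesic curvature in coordinates, $\gamma_j''=L_j^2\kappa\mathbf N-\Gamma(\gamma_j)(\gamma_j',\gamma_j')$. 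The Christoffel symbols are bounded on the strip and $\int|\kappa|\,ds\le (L\,E)^{1/2}$, so the $\gamma_j'$ are bounded in $W^{1,2}$. In fact the $L^2$ norm of the curvature part in the $[0,1]$ parameter is $L_j^{3/2}E(t_j)^{1/2}\to0$. Hence $\gamma_j'$ is equi-H\"older, and by Arzel\`a--Ascoli a subsequence converges in $C^1$ (in a lift to $\mathbb R\times[b,a]$, after normalizing the angular coordinate) to some $\gamma_\infty$ with $L_j\to L_{-\infty}\ge L_*>0$. The limit has constant speed $L_{-\infty}>0$, so it is immersed.

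\textbf{Geodesic equation.} Pass to the limit weakly in $W^{2,2}$. The curvature term tends to $0$ in $L^2$, and $\Gamma(\gamma_j)(\gamma_j',\gamma_j')\to\Gamma(\gamma_\infty)(\gamma_\infty',\gamma_\infty')$ uniformly, using that $r,r'$ are continuous up to $z=a$. Thus $\gamma_\infty''+\Gamma(\gamma_\infty)(\gamma_\infty',\gamma_\infty')=0$ weakly, hence strongly. So $\gamma_\infty$ is a $C^2$ closed geodesic (in $C^{1,1}$ coefficients up to $z=a$). Closedness and angular degree $1$ pass to the limit because the angular lift increment $2\pi$ is preserved under uniform convergence.

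\textbf{Maximum height and main obstacle.} We have $\gamma_j(0)$ at height $\bar z(t_j)$, and $\bar z(t_j)\to a$ by Lemma \ref{lem:zmax}. Together with $\gamma_j\subset S^1\times[b,a)$, this gives $\max_{\gamma_\infty}z=z(\gamma_\infty(0))=a$, which is \eqref{eq:zmax-a}. The main obstacle is regularity at the boundary $z=a$. There the metric is only $C^{1,1}$, and the limit curve touches the open end of $M$, so we must check that the geodesic equation still makes sense and that uniqueness is not needed. Since only the first-order Christoffel symbols $-rr'$ and $r'/r$ enter, and these are Lipschitz on $[b,a]$, the weak limit argument above suffices, and no higher regularity is claimed.
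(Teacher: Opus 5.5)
Your proposal is correct and follows essentially the paper's argument: a uniform $W^{2,2}$ bound on the constant-speed parametrizations coming from $\int_0^1|\nabla_x\tilde\gamma_{j,x}|_g^2\,dx=L_j^3E(t_j)\to0$ and the bounded Christoffel symbols on the strip. This is followed by compact embedding into $C^1$ and passage to the limit in the geodesic equation, using continuity of the Christoffel symbols up to $z=a$. The differences are cosmetic: you base $\gamma_j(0)$ at a highest point instead of using uniform convergence of the heights, and your $L^1$ bound $\int|\kappa|\,ds\le(LE)^{1/2}$ does not by itself give $W^{1,2}$ control of $\gamma_j'$; it is the $L^2$ bound you state next that does. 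You should also note that $\gamma_j'(1)=\gamma_j'(0)$ passes to the limit, so that $\gamma_\infty$ closes up as a $C^1$ curve.
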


\begin{proof}
    Let $L_j:=L(t_j)$ and choose positively oriented unit-speed parametrizations $\gamma_j:[0,L_j]\to\Gamma_{t_j}$.
    Define
    \begin{equation}\label{eq:norm-par}
        \tilde{\gamma}_j(x)=\gamma_j(L_jx),\qquad 0\le x\le 1.
    \end{equation}
    Write $\gamma_j=(\vartheta_j,h_j)$ in lifted coordinates with $\vartheta_j(0)\in[0,2\pi]$.
    Since $\Gamma_{t_j}$ has degree one, $\vartheta_j(L_j)=\vartheta_j(0)+2\pi$.
    For large $j$, Lemma~\ref{lem:strip} gives a fixed strip and $L_*\le L_j\le\Lambda$, which bound the lifted coordinate maps and their first derivatives uniformly.
    On this strip, the metric is uniformly equivalent to the Euclidean coordinate metric and the Christoffel symbols are bounded, so $\gamma_{j,ss}+\Gamma(\gamma_j)(\gamma_{j,s},\gamma_{j,s})=\nabla_s\gamma_{j,s}=\kappa\mathbf N$ gives
    \[
    \int_0^{L_j}|\gamma_{j,ss}|^2\,ds\le C(E(t_j)+L_j).
    \]
    Rescaling to $[0,1]$ and using Lemma~\ref{lem:e-dec}, we obtain the coordinate bounds
    \begin{equation}\label{eq:w22-bd}
        \norm{\tilde{\gamma}_j}_{W^{2,2}([0,1])}\le C, \qquad |\tilde{\gamma}_{j,x}|_g=L_j\ge L_*>0.
    \end{equation}

    Since $r$ and $r'$ extend continuously to $a$ and $r\ge r(b)>0$, the metric and Christoffel symbols extend continuously to $S^1\times[b,a]$.
    By compact Sobolev embedding, a subsequence satisfies
    \begin{equation}\label{eq:c1-conv}
        \tilde{\gamma}_j\to\gamma_\infty\quad \text{in }C^1([0,1])\quad \text{and weakly in }W^{2,2}([0,1]).
    \end{equation}
    The relations $\tilde\gamma_j(1)=\tilde\gamma_j(0)+(2\pi,0)$ and $\tilde\gamma_{j,x}(1)=\tilde\gamma_{j,x}(0)$ pass to the limit, so $\gamma_\infty$ is a closed curve of degree one.
    Moreover, $\gamma_\infty$ has constant speed $|\gamma_{\infty,x}|_g=L_{-\infty}\ge L_*>0$.

    Rescaling $\nabla_s\gamma_{j,s}=\kappa\mathbf N$ also gives
    \begin{equation}\label{eq:cov2}
        \int_0^1 |\nabla_x\tilde{\gamma}_{j,x}|^2_g\,dx=L_j^3E(t_j)\to 0.
    \end{equation}
    By \eqref{eq:c1-conv} and continuity of the extended Christoffel symbols $\Gamma$, \eqref{eq:cov2} yields
    \[
    \gamma_{\infty,xx}+\Gamma(\gamma_\infty)(\gamma_{\infty,x},\gamma_{\infty,x})=0
    \]
    in the distributional sense.
    The connection term is continuous, so $\gamma_\infty$ is $C^2$ and the equation holds classically, including at height $a$.
    Finally, Lemma~\ref{lem:zmax} and uniform convergence of the height components give \eqref{eq:zmax-a}.
\end{proof}

The limiting geodesic reaches height $a$, and Clairaut's relation shows that its entire image lies in $S^1\times\{a\}$.

\begin{lemma}\label{lem:geod}
    The limiting geodesic \(\gamma_\infty\) in Lemma \ref{lem:c1} traverses the circle \(S^1\times\{a\}\) exactly once.
    In particular,
    \begin{equation}\label{eq:l-geod}
        L_{-\infty}=2\pi r(a).
    \end{equation}
\end{lemma}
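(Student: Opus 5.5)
The plan is to apply Clairaut's relation to $\gamma_\infty$, parametrized with constant speed $L_{-\infty}$ on $[0,1]$, written $\gamma_\infty=(\vartheta,h)$ in lifted coordinates.

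\emph{Step 1: the Clairaut constant.} By Lemma~\ref{lem:conn}, the geodesic equation in the $\theta$-component reads $\vartheta''+2f(h)h'\vartheta'=0$. Hence $c:=r(h)^2\vartheta'$ is constant along $\gamma_\infty$. This identity extends to height $a$, because $r$ is $C^{1,1}$ on $[b,a]$ and the equation holds classically there by Lemma~\ref{lem:c1}. The speed relation gives
\[
r(h)^2\vartheta'^2+h'^2=L_{-\infty}^2 .
\]
Since the degree is $1$, $\int_0^1\vartheta'\,dx=2\pi$, so $c\ne0$; with positive orientation, $c>0$.

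\emph{Step 2: evaluate at the top.} By \eqref{eq:zmax-a}, choose $x_0$ with $h(x_0)=a$. There $h'(x_0)=0$, so $r(a)|\vartheta'(x_0)|=L_{-\infty}$, which gives $c=r(a)L_{-\infty}$. At any other point,
\[
h'^2=L_{-\infty}^2\Bigl(1-\frac{r(a)^2}{r(h)^2}\Bigr)\le0,
\]
because $r$ is increasing and $h\le a$. Thus $h'\equiv0$ and $h\equiv a$.

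\emph{Main obstacle.} The delicate point is justifying the geodesic/ODE structure at the boundary height $a$, where $M$ is not defined. Step 2 avoids uniqueness of geodesics entirely. It uses only the first integral, together with the bound $h\le a$ coming from $\gamma_\infty\subset S^1\times[b,a]$ and the monotonicity $r'>0$. The conservation of $c$ needs only the classical $C^2$ geodesic equation with continuous coefficients, which Lemma~\ref{lem:c1} provides. One can also check $h\equiv a$ directly: $r(h)\ge r(a)$ forces $h=a$ by strict monotonicity.

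\emph{Step 3: conclusion.} With $h\equiv a$, we have $\vartheta'=c/r(a)^2=L_{-\infty}/r(a)$, which is constant. Integrating over $[0,1]$ and using degree one gives $L_{-\infty}/r(a)=2\pi$. This yields \eqref{eq:l-geod} and shows that $\gamma_\infty$ traverses $S^1\times\{a\}$ exactly once, monotonically.
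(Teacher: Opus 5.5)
Your proposal is correct and follows the paper's proof: the conserved Clairaut quantity $r(h)^2\vartheta'$ is evaluated at a point where $h=a$ and $h'=0$, which forces $h'^2\le 0$ everywhere, so $h\equiv a$, and degree one then gives the length. The only difference is cosmetic: you keep the constant-speed parametrization on $[0,1]$ with speed $L_{-\infty}$ instead of passing to arclength, which makes the identification $L_{-\infty}=2\pi r(a)$ explicit by integrating $\vartheta'$.
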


\begin{proof}
    Reparametrize the limiting geodesic by arclength and write \(\gamma_\infty(s)=(\vartheta(s),h(s))\).
    The unit-speed condition is
    \begin{equation}\label{eq:unit}
        h_s^2+r(h)^2\vartheta_s^2=1.
    \end{equation}
    Since the metric is independent of the angular coordinate, the $\theta$-component of the geodesic equation gives the Clairaut constant
    \begin{equation}\label{eq:clrt}
        C_0:=r(h)^2\vartheta_s.
    \end{equation}
    Indeed, differentiating the right-hand side and using \(\vartheta_{ss}+2f(h)h_s\vartheta_s=0\) gives zero.

    At a maximum point of \(h\), we have \(h=a\) and \(h_s=0\).
    Equation \eqref{eq:unit} then implies \(|C_0|=r(a)\).
    Substituting \eqref{eq:clrt} into \eqref{eq:unit} yields
    \begin{equation}\label{eq:clrt-id}
        h_s^2=1-\frac{r(a)^2}{r(h)^2}.
    \end{equation}
    Since \(r'>0\), the function \(r\) is strictly increasing, so \(r(h)<r(a)\) whenever \(h<a\).
    The right-hand side of \eqref{eq:clrt-id} would be negative at such a point.
    Therefore \(h\equiv a\).

    The curve has degree \(1\), so its orientation is positive and it traverses this circle exactly once.
    Its length \(L_{-\infty}\) is consequently \(2\pi r(a)\), proving \eqref{eq:l-geod}.
\end{proof}

The limiting parametrization has positive angular derivative, so every curve of degree one that is sufficiently close to it in $C^1$ is graphical.

\begin{proposition}\label{prop:gph}
    Assume \eqref{eq:l-bdd}.
    There exists \(T_0<T\) such that \(\Gamma_t\) is graphical over \(S^1\) for every \(t\le T_0\).
    In the corresponding fixed angular parametrization \(\Gamma_t=\{(\theta,z(\theta,t))\,:\,\theta\in S^1\}\),
    \begin{equation}\label{eq:z-c1}
        \norm{z(\cdot,t)-a}_{C^1(S^1)}\to 0\qquad \text{as }t\to-\infty.
    \end{equation}
\end{proposition}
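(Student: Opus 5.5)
The plan is a contradiction argument built on the uniqueness of the subsequential limit from Lemmas \ref{lem:c1} and \ref{lem:geod}. First I will show that the normalized arclength parametrizations converge along the full family $t\to-\infty$, not only along subsequences. By Lemma \ref{lem:geod}, every subsequential limit traverses $S^1\times\{a\}$ exactly once with constant speed $L_{-\infty}=2\pi r(a)$. So, after fixing the starting angle, the limit is determined up to a rotation in $\theta$:
\[
\gamma_\infty(x)=(\vartheta_0+2\pi x,\,a),\qquad \vartheta_0\in[0,2\pi].
\]
The exact starting point does not matter for the statement. I will therefore only use that every sequence $t_j\to-\infty$ has a subsequence along which
\[
\sup_{x\in[0,1]}|\tilde h_j(x)-a|\to0,\qquad \sup_{x\in[0,1]}\bigl|\tilde h_{j,x}(x)\bigr|\to0,\qquad \sup_{x\in[0,1]}\bigl|\tilde\vartheta_{j,x}(x)-2\pi\bigr|\to0.
\]

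Next comes graphicality. Suppose there are times $t_j\to-\infty$ at which $\Gamma_{t_j}$ fails to be graphical, or at which some $\sup|\tilde h_j-a|$, $\sup|\tilde h_{j,x}|$ stays at least $\varepsilon$. Passing to the subsequence above gives $\tilde\vartheta_{j,x}\ge\pi>0$ for large $j$. Hence the lifted angle $\tilde\vartheta_j$ is strictly increasing on $[0,1]$ with total increment exactly $2\pi$ (degree one). It is therefore a diffeomorphism onto an interval of length $2\pi$, so $\theta\mapsto\tilde h_j(\tilde\vartheta_j^{-1}(\theta))$ is a smooth $2\pi$-periodic function whose graph is $\Gamma_{t_j}$. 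This contradicts non-graphicality, and a compactness/contradiction argument over all sequences yields a uniform $T_0$ beyond which every $\Gamma_t$ is graphical.

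For \eqref{eq:z-c1} I use the graph representation. We have $z(\theta,t)=\tilde h(\tilde\vartheta^{-1}(\theta))$, which gives
\[
z_\theta=\frac{\tilde h_x}{\tilde\vartheta_x}.
\]
Since $\tilde\vartheta_x\to 2\pi$ and $\tilde h_x\to0$ uniformly along the subsequence, $z_\theta\to0$ uniformly. Together with $\tilde h\to a$ uniformly, this gives $\|z(\cdot,t_j)-a\|_{C^1}\to0$ along the subsequence. Since the limit is the same for every sequence, the full limit follows. Smoothness of $z$ comes from smoothness of $\Gamma_t$ and the inverse function theorem.

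The main subtlety is the uniformity in $t$. This is a subsequence argument, so I will phrase everything as: if the claim fails, extract $t_j\to-\infty$ violating it, then apply Lemma \ref{lem:c1} to that sequence. I also need the $C^1$ convergence in Lemma \ref{lem:c1} to be in coordinates on a lift, so that $\tilde\vartheta_{j,x}$ makes sense. The normalization $\vartheta_j(0)\in[0,2\pi]$ in that lemma handles this.
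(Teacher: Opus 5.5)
Your proposal is correct and follows essentially the same route as the paper: extract a violating sequence, apply Lemmas \ref{lem:c1} and \ref{lem:geod} to get $C^1$ convergence of normalized arclength parametrizations to $x\mapsto(\vartheta_0+2\pi x,a)$, conclude that the angular projection has positive derivative and degree one (hence is a diffeomorphism), and then use $z_\theta=\tilde h_x/\tilde\vartheta_x\to0$ together with the subsequence principle to get \eqref{eq:z-c1}. Your opening claim of convergence along the full family $t\to-\infty$ is neither needed nor justified, since the starting angle $\vartheta_0$ may differ between subsequences; you rightly drop it and use only the subsequential statement.
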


\begin{proof}
    Suppose that graphicality fails at arbitrarily negative times.
    Choose \(t_j\to-\infty\) such that \(\Gamma_{t_j}\) is not graphical.
    Lemmas \ref{lem:c1} and \ref{lem:geod} give a subsequence of normalized arclength parametrizations converging in \(C^1\) to a parametrization of the circle $S^1\times\{a\}$ traversed once in the positive direction.
    Writing \(E_\theta=r(z)^{-1}\partial_\theta\), we thus have
    \[
    \langle \mathbf{T}_j,E_\theta\rangle>\frac12
    \]
    at every point for all sufficiently large \(j\).
    It follows that
    \[
    d\theta(\mathbf{T}_j)=r(z)^{-1}\langle\mathbf{T}_j,E_\theta\rangle>0.
    \]
    The restriction of the angular projection \(\pi_{S^1}:M\to S^1\) to \(\Gamma_{t_j}\) has positive derivative and degree \(1\), so it is a diffeomorphism.
    This says that \(\Gamma_{t_j}\) is a graph, contradicting the choice of \(t_j\).
    Thus $\Gamma_t$ is graphical for all sufficiently negative $t$.

    By Lemmas~\ref{lem:c1} and~\ref{lem:geod}, every sequence \(t_j\to-\infty\) admits a subsequence with normalized arclength parametrizations \(\tilde\gamma_j=(\vartheta_j,h_j)\) in lifted coordinates satisfying \(h_j\to a\) in \(C^1([0,1])\) and \(\vartheta_{j,x}\to2\pi\) uniformly.
    Since the angular maps are diffeomorphisms, \(z(\cdot,t_j)\to a\) uniformly and
    \[
    z_\theta(\vartheta_j(x),t_j)=\frac{h_{j,x}(x)}{\vartheta_{j,x}(x)}\longrightarrow0
    \]
    uniformly in \(x\).
    Every sequence tending to \(-\infty\) therefore has a subsequence along which the norm in \eqref{eq:z-c1} tends to zero, which proves \eqref{eq:z-c1}.
\end{proof}

Graphicality at sufficiently negative times extends to all later times by the following preservation result of Fujihara \cite{Fujihara24}.

\begin{lemma}
\label{lem:gph-pr}
Let $\{\Gamma_t\}_{t\in(-\infty,T)}$ be a smooth, closed, embedded solution to curve shortening flow whose curves are not null-homotopic in $M$.
If $\Gamma_{t_0}$ is graphical over $S^1$ for some $t_0<T$, then $\Gamma_t$ is graphical over $S^1$ for every $t\in[t_0,T)$.
\end{lemma}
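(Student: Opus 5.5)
The plan is to prove the lemma by a maximum principle argument on the angle function $\Theta=\langle\mathbf N,\partial_z\rangle$. A degree-one embedded curve is graphical exactly when $\Theta>0$ everywhere. Indeed, by \eqref{eq:n-fr} we have $\Theta=r(h)\vartheta_s$, and positivity of $\vartheta_s$ together with degree one makes $\pi_{S^1}|_{\Gamma_t}$ a diffeomorphism, as in the proof of Proposition~\ref{prop:gph}. So it suffices to show that $\min_{\Gamma_t}\Theta$ cannot reach zero on $[t_0,T)$.

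First I will derive the evolution equation of $\Theta$ in a normal parametrization. Using $\partial_t\mathbf N=-\kappa_s\mathbf T$, $\nabla_{\mathbf T}\mathbf N=-\kappa\mathbf T$ and the connection formulas of Lemma~\ref{lem:conn}, I compute $\nabla_{\mathbf T}\partial_z=f\,\langle\mathbf T,E_\theta\rangle E_\theta$, where $E_\theta=r^{-1}\partial_\theta$ and $\langle\mathbf T,E_\theta\rangle=\Theta$. This should give $\Theta_s=-\kappa h_s+f\,\Theta\langle\mathbf N,E_\theta\rangle$ and then an equation of the form
\[
\partial_t\Theta=\Theta_{ss}+b\,\Theta_s+c\,\Theta.
\]
The coefficients $b,c$ are smooth functions built from $\kappa$, $f(h)$, $f'(h)$ and $h_s$. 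The key structural point is that every term in $\partial_t\Theta-\Theta_{ss}$ either contains a factor $\Theta$ or contains $\Theta_s$. This reflects the fact that $\partial_\theta$ is a Killing field: the Jacobi field $\langle\partial_\theta,\mathbf N\rangle=-r\,h_s$ satisfies the linearized equation. Alternatively, I can use directly that $\Theta$ is, up to a positive factor, the normal component of the Killing field $\partial_z$-rotation pairing, that is, $\langle \mathbf{N},\partial_z\rangle=\langle \mathbf T, E_\theta\rangle$.

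Second, I will apply the strong maximum principle on a compact interval $[t_0,t_2]$ with $t_2<T$. On such an interval the flow is smooth and stays in a compact region, so $b$ and $c$ are bounded, say $|c|\le C$. Then $e^{Ct}\Theta$ is a supersolution-type quantity whose minimum cannot decrease while it is positive. Concretely, at a first time $t^*>t_0$ with $\min\Theta=0$, the function $\Theta\ge0$ satisfies a linear parabolic equation with bounded coefficients. The strong maximum principle then forces $\Theta\equiv0$ on $\Gamma_{t^*}$, or at earlier times. But $\Theta\equiv 0$ means $\vartheta_s\equiv0$, which contradicts degree one. Hence $\Theta>0$ on $[t_0,t_2]$ for every $t_2<T$.

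The main obstacle is the coefficient computation: I must verify that no inhomogeneous term survives. If one appears, I would instead fall back on the preservation result of \cite{Fujihara24}, which the paper cites for exactly this statement, or argue via the Sturmian intersection count between $\Gamma_t$ and the rotated flows $R_\alpha\Gamma_t$. Rotations are isometries, so $R_\alpha\Gamma_t$ is again a solution, and graphicality is equivalent to $\Gamma_t$ and $R_\alpha\Gamma_t$ being disjoint for all small $\alpha\neq0$. Disjointness of two solutions is preserved by the avoidance principle on compact time intervals.
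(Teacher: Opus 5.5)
Your main argument is the paper's: the evolution of $\Theta$ is indeed homogeneous, $(\partial_t-\partial_s^2)\Theta=\bigl[(\kappa+f(h)\Theta)^2+(f'(h)-f(h)^2)(1-\Theta^2)\bigr]\Theta$, and the maximum principle with bounded coefficients on compact intervals $[t_0,t_1]$ then preserves $\Theta>0$. The apparently inhomogeneous term $\kappa f h_s^2$ in $\partial_t\Theta=-\kappa_s h_s+\kappa f h_s^2$ cancels once you substitute your formula $\Theta_s=-(\kappa+f\Theta)h_s$, using $\langle\mathbf N,E_\theta\rangle=-h_s$; the rotation fallback should be dropped, since a graph is never disjoint from its small rotations ($z(\theta)-z(\theta-\alpha)$ has zero mean and so vanishes somewhere).
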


\begin{proof}
Choose the orientation of angular degree one and work in a normal parametrization.
The angle evolution identity in \cite[Lemma~2.2]{Fujihara25}, after reversing its curvature sign to match $\partial_tF=\kappa\mathbf{N}$, gives
\[
(\partial_t-\partial_s^2)\Theta
=
\left[
(\kappa+f(h)\Theta)^2
+\bigl(f'(h)-f(h)^2\bigr)(1-\Theta^2)
\right]\Theta.
\]

Fix $t_1\in(t_0,T)$.
Since the flow is smooth and the parameter domain $S^1\times[t_0,t_1]$ is compact, its image is contained in a compact subset of $M$.
Hence the coefficient multiplying $\Theta$ in the above equation is bounded on this time interval.
The maximum principle and $\Theta(\cdot,t_0)>0$ therefore imply
\[
\Theta(\cdot,t)>0
\qquad\text{for every }t\in[t_0,t_1].
\]

Since $d\theta(\mathbf T)=\Theta/r(h)>0$ and the angular projection has degree one, $\Gamma_t$ is graphical.
As $t_1<T$ was arbitrary, the result follows.
\end{proof}

By Lemma \ref{lem:gph-pr}, failure of graphicality persists backward in time.
We use this to bound the growth of the length from below.

\begin{proposition}\label{prop:subsqrt}
    If the length satisfies
    \begin{equation}\label{eq:l-sub}
        L(t)=o(\sqrt{-t})\qquad \text{as }t\to-\infty,
    \end{equation}
    then \(\Gamma_t\) is graphical over \(S^1\) for every \(t<T\).
\end{proposition}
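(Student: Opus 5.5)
The plan is to argue by contraposition. Suppose some $\Gamma_{t_0}$ is not graphical. By Lemma \ref{lem:gph-pr}, no $\Gamma_t$ with $t\le t_0$ is graphical either, since otherwise graphicality would propagate forward to $t_0$. I will show that a non-graphical degree-one curve whose maximum height is close to $a$ has total absolute curvature bounded below by a universal constant $c_0>0$. Then Cauchy--Schwarz gives $E(t)\ge(\int_{\Gamma_t}|\kappa|\,ds)^2/L(t)\ge c_0^2/L(t)$. Combined with \eqref{eq:l-evol} this yields $(L^2)'=-2LE\le-2c_0^2$, hence $L(t)^2\ge L(t_0')^2+2c_0^2(t_0'-t)\ge 2c_0^2(t_0'-t)$ for $t\le t_0'$. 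This contradicts \eqref{eq:l-sub}.

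For the curvature lower bound, write $\mathbf T=\cos\omega\,E_\theta+\sin\omega\,\partial_z$ with $E_\theta=r(h)^{-1}\partial_\theta$, so that $\Theta=\cos\omega$, $h_s=\sin\omega$ and $\vartheta_s=\cos\omega/r(h)$. Lemma \ref{lem:conn} gives $\nabla_{\mathbf T}E_\theta=-f\cos\omega\,\partial_z$ and $\nabla_{\mathbf T}\partial_z=f\cos\omega\,E_\theta$. Pairing $\nabla_{\mathbf T}\mathbf T$ with $\mathbf N=-\sin\omega\,E_\theta+\cos\omega\,\partial_z$ then gives $\kappa=\omega_s-f(h)\cos\omega$. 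The key step is the Clairaut-type identity
\[
\partial_s\bigl(r(h)\Theta\bigr)=r'(h)\sin\omega\cos\omega-r(h)\sin\omega\,\omega_s=-r(h)\,\kappa\sin\omega,
\]
in which the $f$-terms cancel exactly. Since $r\le r(a)$, this gives $|\partial_s(r(h)\Theta)|\le r(a)|\kappa|$.

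Now I fix $t\le t_0$. At a point of maximal height we have $h_s=0$, so $\Theta=\pm1$. I will check that the positive orientation forces $\Theta=1$ there; if this fails at the maximum, I will use instead that $\int\vartheta_s\,ds=2\pi>0$ forces $\Theta>0$ somewhere, and pick the maximal value of $r(h)\Theta$, which the same argument controls. On the other hand, $\Theta\le0$ at some point, because $\Theta>0$ everywhere together with degree one would make the angular projection a diffeomorphism, as in the proof of Lemma \ref{lem:gph-pr}. Both arcs of the closed curve joining these two points carry a drop of $r(h)\Theta$ of at least $r(\bar z(t))$. Hence $\int_{\Gamma_t}|\kappa|\,ds\ge 2r(\bar z(t))/r(a)$. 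By Lemma \ref{lem:zmax}, $\bar z(t)\to a$, so for $t\le t_0'$ with $t_0'$ sufficiently negative this bound is at least $c_0:=1$.

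The main obstacle I expect is the curvature lower bound, specifically controlling the zeroth-order connection term in $\Theta_s$ without any strip confinement, since $L$ may be unbounded and the curve may dip far below $a$. The cancellation in $\partial_s(r\Theta)$ is what removes this difficulty. The remaining care is the orientation bookkeeping at the maximum point, which ensures that $r(h)\Theta$ genuinely starts near $r(a)$ rather than near $-r(a)$.
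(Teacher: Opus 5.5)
Your proposal is correct and is essentially the paper's proof. The paper uses the same quantity $\mathcal J=\langle\mathbf T,\partial_\theta\rangle=r(h)\Theta$ with $\mathcal J_s=-r(h)h_s\kappa$, the same two-arc bound $\int_{\Gamma_t}|\kappa|\,ds\ge 2r(\bar z(t))/r(a)$, and the same Cauchy--Schwarz step leading to $(L^2)'\le-2$.

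The orientation check at the maximum is unnecessary, and the paper sidesteps it. Since $\Theta$ is positive somewhere (degree one) and nonpositive somewhere (nongraphicality), it vanishes at some point $P_0$. Comparing $\mathcal J(P_0)=0$ with $|\mathcal J(P_1)|=r(\bar z(t))$ at a maximum-height point $P_1$ then works for either sign of $\Theta(P_1)$.
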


\begin{proof}
    Suppose that \(\Gamma_{t_*}\) is not graphical for some \(t_*<T\).
    By Lemma \ref{lem:gph-pr}, \(\Gamma_t\) is not graphical for any \(t\le t_*\).
    Define
    \begin{equation}\label{eq:j-def}
        \mathcal{J}:=\langle \mathbf{T},\partial_\theta\rangle =r(h)\Theta.
    \end{equation}
    Differentiating \eqref{eq:j-def} and using \eqref{eq:csf}, we obtain
    \begin{equation*}
        \begin{split}
            \mathcal{J}_s
            &=\langle \nabla_\mathbf{T}\mathbf{T},\partial_\theta \rangle+\langle \mathbf{T},\nabla_\mathbf{T}\partial_\theta \rangle\\
            &=\langle \kappa\mathbf{N},\partial_\theta\rangle =-h_s r(h)\kappa.
        \end{split}
    \end{equation*}
    By \eqref{eq:conn}, the second term in the first line is $\vartheta_s h_s(r(h)^2f(h)-r(h)r'(h))=0$.
    The last equality uses $\langle\mathbf N,\partial_\theta\rangle=-r(h)h_s$, which follows from \eqref{eq:n-fr}.
    Since \(r(h)\le r(a)\) and \(|h_s|\le 1\), we obtain
    \[
    |\mathcal{J}_s|\le r(a)|\kappa|.
    \]
    At each fixed time $t\le t_*$, the angular projection of $\Gamma_t$ has a critical point $P_0$.
    Indeed, if it had no critical point, it would be a degree one covering and hence a diffeomorphism, contrary to nongraphicality.
    Thus $\mathcal J(P_0)=0$.
    Choose $P_1\in\Gamma_t$ where the height attains its maximum, so $h_s(P_1)=0$.
    The unit-speed identity gives
    \begin{equation}\label{eq:j-max}
        |\Theta(P_1)|=1,\qquad |\mathcal{J}(P_1)|=r(\bar{z}(t)).
    \end{equation}
    The two arcs joining $P_0$ and $P_1$ each contribute at least \(r(\bar{z}(t))\) to the total variation of \(\mathcal{J}\).
    Thus, by the Cauchy--Schwarz inequality,
    \begin{equation*}
        \begin{split}
            2r(\bar{z}(t))
            &\le\int_{\Gamma_t}|\mathcal{J}_s|\,ds\le r(a)\int_{\Gamma_t}|\kappa|\,ds\\
            &\le r(a)\sqrt{L(t)\int_{\Gamma_t}\kappa^2\,ds}=r(a)\cdot(L(t)E(t))^{1/2}.
        \end{split}
    \end{equation*}
    Hence
    \[
    L(t)E(t)\ge 4\frac{r(\bar{z}(t))^2}{r(a)^2}.
    \]
    By Lemma \ref{lem:zmax}, \(\bar{z}(t)\to a\) as \(t\to-\infty\).
    Hence there exists \(\tau<t_*\) such that \(r(\bar{z}(t))\ge r(a)/2\) for every \(t\le \tau\).
    On this interval, \(L(t)E(t)\ge 1\), and \eqref{eq:l-evol} gives
    \[
    \frac{d}{dt}L(t)^2=2L(t)L'(t)=-2L(t)E(t)\le -2.
    \]
    Integrating from \(t\) to \(\tau\), we find
    \[
    L(t)^2\ge L(\tau)^2+2(\tau-t)\qquad (t\le\tau).
    \]
    Thus
    \[
    \liminf_{t\to-\infty}\frac{L(t)^2}{-t}\ge 2,
    \]
    contradicting \(L(t)=o(\sqrt{-t})\).
    Therefore \(\Gamma_t\) is graphical for every \(t<T\).
\end{proof}

\subsection{Length bound from graphicality}\label{sec:gl}

In the following two lemmas and Proposition \ref{prop:l-gph}, assume that $\Gamma_t$ is graphical for every $t\le t_0$, where $t_0<T$.
On this interval, reparametrize the curves by the angular coordinate as
\begin{equation}
    \bar F(\theta,t)=(\theta,z(\theta,t)),\qquad \theta\in S^1,\quad t\le t_0.
\end{equation}
The two parametrizations satisfy $F(x,t)=\bar F(\vartheta(x,t),t)$.
Thus $h(x,t)=z(\vartheta(x,t),t)$ and $h_t=z_t+z_\theta\vartheta_t$, with the derivatives of $z$ evaluated at $(\vartheta(x,t),t)$.
The normal velocity of $\bar F$ satisfies
\begin{equation}
    \langle\partial_t\bar F,\mathbf{N}\rangle=\kappa,
\end{equation}
although its tangential velocity need not vanish.
The height function $z$ satisfies
\begin{equation}\label{eq:h-gph}
    \partial_tz=\frac{z_{\theta\theta}}{r(z)^2+z_\theta^2}-f(z)\frac{r(z)^2+2z_\theta^2}{r(z)^2+z_\theta^2}.
\end{equation}
This equation can be rewritten as
\begin{equation}\label{eq:h-div1}
    r(z)z_t=\partial_\theta\arctan\left(\frac{z_\theta}{r(z)}\right)-r'(z),
\end{equation}
\begin{equation}\label{eq:h-div2}
    z_t=\partial_\theta\left(\frac{1}{r(z)}\arctan\left(\frac{z_\theta}{r(z)}\right)\right)-f(z)+f(z)\frac{z_\theta}{r(z)}\arctan\left(\frac{z_\theta}{r(z)}\right).
\end{equation}

To derive \eqref{eq:h-gph}, put $\sigma:=\sqrt{r(z)^2+z_\theta^2}$, so that $ds=\sigma\,d\theta$.
Then
\begin{equation}
\begin{aligned}
    \mathbf{T}&=\frac{1}{\sigma}(\partial_\theta+z_\theta\partial_z),\qquad \Theta=\frac{r(z)}{\sigma}>0,\\
    \mathbf{N}&=-\frac{z_\theta}{r(z)\sigma}\partial_\theta+\frac{r(z)}{\sigma}\partial_z.
\end{aligned}
\end{equation}
In the fixed angular parametrization, the normal velocity condition gives $z_t\Theta=\kappa$.
Using the spatial identity $\kappa\Theta=\Delta z-f(z)\Theta^2$ from the proof of Lemma \ref{lem:height}, we obtain
\begin{equation}\label{eq:zt}
    z_t=\frac{\Delta z-f(z)\Theta^2}{\Theta^2}.
\end{equation}
Since $\partial_s=\sigma^{-1}\partial_\theta$ and $\sigma_\theta=z_\theta(rr'+z_{\theta\theta})/\sigma$, we have
\begin{equation}
    \Delta z=\frac{1}{\sigma}\partial_\theta\left(\frac{z_\theta}{\sigma}\right)=\frac{r^2z_{\theta\theta}-rr'z_\theta^2}{\sigma^4}.
\end{equation}
Substituting this formula into \eqref{eq:zt} proves \eqref{eq:h-gph}.
Finally,
\begin{equation}
    \partial_\theta\arctan\left(\frac{z_\theta}{r}\right)=\frac{rz_{\theta\theta}-r'z_\theta^2}{r^2+z_\theta^2}.
\end{equation}
Combining this identity with \eqref{eq:h-gph} proves \eqref{eq:h-div1}.
The identity $\partial_\theta(r(z)^{-1})=-f(z)z_\theta/r(z)$ then gives \eqref{eq:h-div2}.

\begin{remark}
The integral of $\Delta z$ with respect to arclength $ds$ is zero, but $\int\Delta z\,d\theta$ need not vanish.
\end{remark}

Using \eqref{eq:h-div1}, we show that a weighted area of the region between the graph and the circle $S^1\times\{a\}$ is bounded and nondecreasing in time.
Its derivative has a positive lower bound whenever the graph crosses a fixed height interval.

\begin{lemma}\label{lem:fd}
    For $t\le t_0$, define
    \begin{equation}\label{eq:f-def}
        \mathcal{F}(t):=\int_0^{2\pi}\int_{z(\theta,t)}^ae^{\xi-a}r(\xi)\,d\xi d\theta,
    \end{equation}
    \begin{equation}\label{eq:d-def}
        D(t):=\int_0^{2\pi}e^{z-a}z_\theta\arctan\left( \frac{z_\theta}{r(z)}\right)\,d\theta.
    \end{equation}
    Then,
    \begin{equation}\label{eq:fd}
        0\le\mathcal{F}(t)\le 2\pi r(a), \qquad \mathcal{F}'(t)=D(t)+\int_0^{2\pi}e^{z-a}r'(z)\,d\theta\ge D(t)\ge 0.
    \end{equation}
    In particular,
    \begin{equation}\label{eq:d-int}
        \int_{-\infty}^{t_0}D(t)\,dt<\infty.
    \end{equation}
    Moreover, if \(a_1<a_2<a\) and \(\underline{z}(t)\le a_1<a_2\le\bar{z}(t)\), then
    \begin{equation}\label{eq:d-lb}
        D(t)\ge d_0(a_1,a_2)>0,
    \end{equation}
    where $d_0(a_1,a_2)>0$ depends only on $a_1$, $a_2$, and the fixed geometry.
\end{lemma}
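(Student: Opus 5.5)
Differentiate $\mathcal F$ under the integral sign and substitute the divergence form \eqref{eq:h-div1}. Since the inner integral has lower limit $z(\theta,t)$,
\[
\mathcal F'(t)=-\int_0^{2\pi}e^{z-a}r(z)z_t\,d\theta
=-\int_0^{2\pi}e^{z-a}\Bigl[\partial_\theta\arctan\Bigl(\frac{z_\theta}{r(z)}\Bigr)-r'(z)\Bigr]d\theta .
\]
Integrating the first term by parts on the circle moves $\partial_\theta$ onto $e^{z-a}$, which produces $e^{z-a}z_\theta$. The result is $\mathcal F'=D+\int e^{z-a}r'(z)\,d\theta$. Since $r'>0$, this gives $\mathcal F'\ge D$. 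Since $x\arctan(x/r)\ge0$ pointwise, $D\ge0$.

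For the bounds on $\mathcal F$, note that $z<a$ gives $\mathcal F\ge0$. For the upper bound, use $r\le r(a)$ to get
\[
\int_z^a e^{\xi-a}r(\xi)\,d\xi\le r(a)\bigl(1-e^{z-a}\bigr)\le r(a).
\]
Integrating in $\theta$ then gives $\mathcal F\le 2\pi r(a)$. Because $\mathcal F$ is monotone and bounded, integrating $D\le\mathcal F'$ over $(-\infty,t_0]$ yields \eqref{eq:d-int}. The differentiation under the integral sign is justified because $z$ is smooth on compact time intervals and the integrand is continuous up to $\xi=a$.

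The lower bound \eqref{eq:d-lb} is the main step. The graph is a closed curve over $S^1$ whose minimum is at most $a_1$ and whose maximum is at least $a_2$. Hence there are two disjoint $\theta$-intervals, one where $z$ rises from $a_1$ to $a_2$ and one where it falls back, and on each of these $z\in[a_1,a_2]$. Let $J$ be one of them, so $\int_J|z_\theta|\,d\theta\ge a_2-a_1$, and the length of $J$ is at most $2\pi$.

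On $J$ the weight $e^{z-a}$ is at least $e^{a_1-a}$, and $r(z)\le r(a)$. Set $\phi(x)=x\arctan(x/r(a))$. This function is convex and even, it grows superlinearly near $0$, and it is nondecreasing in $|x|$. Because $\arctan(x/r(z))\ge\arctan(x/r(a))$ for $x\ge0$, we have pointwise $z_\theta\arctan(z_\theta/r(z))\ge\phi(z_\theta)$. Jensen's inequality on $J$ then gives
\[
\int_J\phi(z_\theta)\,d\theta\ge |J|\,\phi\!\Bigl(\frac{a_2-a_1}{|J|}\Bigr).
\]
Since $\phi(x)/x$ is increasing, the function $\ell\mapsto\ell\,\phi(c/\ell)=c\arctan\bigl(c/(\ell r(a))\bigr)$ is decreasing in $\ell$. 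So the right-hand side is at least $(a_2-a_1)\arctan\bigl((a_2-a_1)/(2\pi r(a))\bigr)$.

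Counting both intervals and dropping the nonnegative contribution of the rest of the circle, we obtain
\[
d_0=2e^{a_1-a}(a_2-a_1)\arctan\Bigl(\frac{a_2-a_1}{2\pi r(a)}\Bigr).
\]
This constant is positive and depends only on $a_1$, $a_2$, $a$ and $r(a)$. The one point needing care is the comparison $\arctan(x/r(z))\ge\arctan(x/r(a))$, which fixes the denominator $r(a)$ uniformly on $J$.
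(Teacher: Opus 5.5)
Your proof is correct and follows the paper's argument. You use the same Leibniz differentiation and integration by parts against \eqref{eq:h-div1}, and the same crossing-plus-Jensen lower bound with the convex function $x\arctan(x/r(a))$. The only difference is that you apply Jensen separately on two crossing intervals, using that $\ell\,\varphi(c/\ell)$ decreases in $\ell$, instead of on the whole circle with $|z_\theta|\mathbf{1}_A$ extended by zero. This gives a slightly weaker but still positive constant, with $2\pi r(a)$ in place of $\pi r(a)$ inside the arctangent.
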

\begin{proof}
    Since \(0<r\le r(a)\),
    \[
    0\le \mathcal{F}(t)\le 2\pi r(a)\int_{-\infty}^ae^{\xi-a}\,d\xi=2\pi r(a).
    \]
    Differentiating $\mathcal{F}(t)$ and using \eqref{eq:h-div1}, we obtain
    \begin{equation*}
        \begin{split}
            \mathcal{F}'(t)
            &=-\int_0^{2\pi}e^{z-a}r(z)z_t\,d\theta\\
            &=-\int_0^{2\pi}e^{z-a}\partial_\theta\arctan\left( \frac{z_\theta}{r(z)}\right)\,d\theta+\int_0^{2\pi}e^{z-a}r'(z)\,d\theta\\
            &=\int_0^{2\pi}e^{z-a}z_\theta\arctan\left( \frac{z_\theta}{r(z)}\right)\,d\theta + \int_0^{2\pi} e^{z-a}r'(z)\,d\theta.
        \end{split}
    \end{equation*}
    The boundary term vanishes by periodicity.
    Both terms in the last line are nonnegative.
    Integrating over $[\tau,t_0]$, with $\tau<t_0$, gives
    \[
    \int_\tau^{t_0}D(t)\,dt\le \mathcal{F}(t_0)-\mathcal{F}(\tau)\le 2\pi r(a).
    \]
    Letting $\tau\to-\infty$ proves \eqref{eq:d-int}.

    We next prove \eqref{eq:d-lb}.
    Set
    \[
    A:=\{\theta\in[0,2\pi] : a_1\le z(\theta,t)\le a_2\}.
    \]
    The periodic graph crosses the height interval \([a_1,a_2]\) at least twice, and hence
    \begin{equation}\label{eq:cross}
        \int_A|z_\theta|\,d\theta\ge 2(a_2-a_1).
    \end{equation}
    Indeed, the function obtained by truncating \(z\) to \([a_1,a_2]\) is periodic, has minimum \(a_1\) and maximum \(a_2\), and therefore has total variation at least \(2(a_2-a_1)\).
    Its derivative agrees almost everywhere with \(z_\theta \mathbf{1}_{\{a_1<z<a_2\}}\), which gives \eqref{eq:cross}.

    Consider the increasing convex function
    \begin{equation}\label{eq:phi}
        \varphi(x):=x\arctan(x/r(a)),\qquad x\ge0.
    \end{equation}
    Since \(r(z)\le r(a)\),
    \[
    z_\theta\arctan\left( \frac{z_\theta}{r(z)}\right)=|z_\theta|\arctan\left(\frac{|z_\theta|}{r(z)}\right)\ge \varphi(|z_\theta|).
    \]
    Extend \(|z_\theta|\mathbf{1}_A\) by zero on the rest of the circle.
    Since \(\varphi(0)=0\), Jensen's inequality on the circle of length \(2\pi\) gives
    \begin{equation*}
        \begin{split}
            D(t)&\ge e^{a_1-a}\int_A\varphi(|z_\theta|)\,d\theta=e^{a_1-a}\int_0^{2\pi}\varphi(|z_\theta|\mathbf{1}_A)\,d\theta\\
            &\ge 2\pi e^{a_1-a}\varphi \left( \frac{1}{2\pi}\int_0^{2\pi}|z_\theta|\mathbf{1}_A\,d\theta\right)\\
            &\ge2\pi e^{a_1-a}\varphi \left( \frac{a_2-a_1}{\pi}\right)\\
            &=2e^{a_1-a}(a_2-a_1)\arctan\left(\frac{a_2-a_1}{\pi r(a)}\right)=:d_0(a_1,a_2)>0.
        \end{split}
    \end{equation*}
    This proves the assertion.
\end{proof}

The lower bound \eqref{eq:d-lb}, the integrability \eqref{eq:d-int}, and the inequalities of Lemma \ref{lem:ext} show that the minimum height tends to $a$.

\begin{lemma}\label{lem:zmin}
    If \(\Gamma_t\) is graphical over \(S^1\) for all \(t\le t_0<T\), then
    \begin{equation}\label{eq:zmin}
        \underline{z}(t)\to a\qquad \text{as } t\to-\infty.
    \end{equation}
    Hence \(z(\cdot,t)\to a\) uniformly on \(S^1\).
\end{lemma}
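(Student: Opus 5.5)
We argue by contradiction, combining the crossing lower bound \eqref{eq:d-lb}, the integrability \eqref{eq:d-int}, and the ODE inequality for $\underline z$ from Lemma \ref{lem:ext}. By Lemma \ref{lem:zmax}, $\bar z(t)\to a$. Suppose $\underline z(t)\not\to a$. Then there are $a_1<a$ and times $t_j\to-\infty$ with $\underline z(t_j)\le a_1$. Fix $a_2\in(a_1,a)$ and $a_1'\in(a_1,a_2)$. For $t$ sufficiently negative we have $\bar z(t)\ge a_2$. So whenever $\underline z(t)\le a_1'$, Lemma \ref{lem:fd} (applied with $a_1'$ in place of $a_1$) gives $D(t)\ge d_0(a_1',a_2)>0$.

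The aim is to show that $\underline z$ stays below $a_1'$ on a time interval of fixed length $\delta>0$ ending at each $t_j$. Integrating $\underline z'\ge -f(\underline z)$ forward from $t\le t_j$ to $t_j$ controls $\underline z(t_j)$ from below, which is the wrong direction for this purpose. Instead use the inequality backward in time: for $t<t_j$,
\[
\underline z(t)\le \underline z(t_j)+\int_t^{t_j} f(\underline z(\tau))\,d\tau\le a_1+\|f\|_{L^\infty(I)}(t_j-t).
\]
This holds because $\underline z$ is locally Lipschitz with $\underline z'\ge -f(\underline z)\ge-\|f\|_\infty$ a.e. Here $f=r'/r$ is bounded, since $r'$ is bounded and $r\ge r(z_{\min})$ on the range of heights in question. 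That range stays bounded below because the oscillation is controlled by $L(t)/2$ and $L(t)\le L(t_0)$ for $t\ge t$. More simply, every height involved lies in $[\min(\underline z(t_j)),a)$, and on $[\underline z(t)-1,a)$ the function $f$ is bounded. To avoid circularity, note only that $-f(\underline z)\ge -\sup_{z\ge \underline z}f$. Because $f>0$, the inequality $\underline z'\ge -f$ directly gives $\underline z(t)\le \underline z(t_j)+\int_t^{t_j}f$ with $f$ evaluated at heights at most $\underline z(t)$ in the worst case. This is the main obstacle: a uniform bound on $f$ on $(-\infty,a)$ is not assumed, since $r$ may tend to $0$ as $z\to-\infty$.

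Resolve this with a continuity/first-exit argument. Let $\delta:=(a_1'-a_1)/(1+\sup_{[a_1-1,a]}f)$ and $s_j:=\inf\{s\in[t_j-\delta,t_j]:\underline z>a_1'\text{ on }(s,t_j]\}$. While $\underline z\in[a_1-1,a_1']$, the derivative bound is uniform. If $\underline z$ drops below $a_1-1$, then it is certainly below $a_1'$. Hence on $[t_j-\delta,t_j]$ the function $\underline z$ cannot rise above $a_1'$ before traversing a gap of size $a_1'-a_1$ at speed at most $\sup_{[a_1-1,a]}f$. Here only the speed in the band $[a_1,a_1']$ matters, since $\underline z'\le$ nothing is needed; what is needed is $\underline z(t)\le a_1'$ for $t\in[t_j-\delta,t_j]$. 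That follows because, going backward from $t_j$, any increase of $\underline z$ from $a_1$ to $a_1'$ requires $\underline z'\ge -f$ to be violated on the band. Thus $\underline z(t)\le a_1'$ on $[t_j-\delta,t_j]$. Consequently
\[
\int_{t_j-\delta}^{t_j}D\,dt\ge \delta\,d_0(a_1',a_2)>0
\]
for all large $j$. After passing to a subsequence with disjoint intervals, this contradicts \eqref{eq:d-int}. Hence $\underline z(t)\to a$.

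Finally, uniform convergence $z(\cdot,t)\to a$ follows from $\underline z(t)\le z(\theta,t)\le\bar z(t)<a$ together with $\underline z(t),\bar z(t)\to a$.
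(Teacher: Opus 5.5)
Your proposal is correct and is essentially the paper's argument: by contradiction, $\underline z'\ge -f(\underline z)$ keeps $\underline z$ below a threshold on a time window of fixed length before each $t_j$. On that window \eqref{eq:d-lb} gives a uniform lower bound on $D$, which contradicts \eqref{eq:d-int}.

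The only difference is how you get the window. You use a first-crossing argument with the bound $\sup_{[a_1,a_1']}f<\infty$ on the band. The paper instead integrates $\underline z'/f(\underline z)\ge -1$ to get $\int_b^{\underline z(t)}d\xi/f(\xi)\le t_i-t$, which needs no bound on $f$ anywhere.

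In a final write-up, delete the abandoned detours and state the crossing step explicitly. In particular, remove the appeal to $L(t)\le L(t_0)$, which goes the wrong way in time, and the vacuous definition of $s_j$.
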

\begin{proof}
    Suppose that \eqref{eq:zmin} does not hold.
    Then there exist \(b<a\) and a sequence \(t_i\to-\infty\) such that \(\underline{z}(t_i)\le b\).
    Choose \(a_1,a_2\) with \(b<a_1<a_2<a\).
    Set
    \[
    \tau:=\frac12\int_b^{a_1}\frac{d\xi}{f(\xi)}>0.
    \]
    The number $\tau$ is independent of $i$.
    Since $\underline z$ is locally Lipschitz, integrating \eqref{eq:ext} and using $\underline z(t_i)\le b$ gives, for every $i$ and $t\in[t_i-\tau,t_i]$,
    \begin{equation*}
        \int_b^{\underline z(t)}\frac{d\xi}{f(\xi)}\le t_i-t\le\tau<\int_b^{a_1}\frac{d\xi}{f(\xi)}.
    \end{equation*}
    Since $f>0$, this implies $\underline z(t)<a_1$ throughout $[t_i-\tau,t_i]$.
    Lemma \ref{lem:zmax} gives $\bar z(t)>a_2$ at all sufficiently negative times.
    For large $i$, Lemma \ref{lem:fd} and \eqref{eq:d-int} therefore give
    \begin{equation}
    \begin{aligned}
        0<\tau d_0(a_1,a_2)&\le\int_{t_i-\tau}^{t_i}D(t)\,dt\\
        &\le\int_{-\infty}^{t_i}D(t)\,dt\longrightarrow0,
    \end{aligned}
    \end{equation}
    a contradiction.
    Thus $\underline z(t)\to a$.
    Finally,
    \[
    \norm{z(\cdot,t)-a}_{L^\infty(S^1)}=a-\underline{z}(t)\to 0,
    \]
    which proves uniform convergence.
\end{proof}

Uniform convergence of the height alone does not control the length.
Together with this convergence, the integrability of $D$ gives a sequence of times along which the total variation of the height tends to zero, allowing us to determine the backward limit of $L(t)$.

\begin{proposition}\label{prop:l-gph}
    If an ancient solution is a graph over \(S^1\) for every sufficiently negative time, then
    \begin{equation}\label{eq:l-lim}
        \lim_{t\to-\infty}\underline{z}(t)=a,\qquad L_{-\infty}:=\lim_{t\to-\infty}L(t)=2\pi r(a).
    \end{equation}
    In particular,
    \begin{equation}\label{eq:l-ub}
        L(t)\le 2\pi r(a)\qquad \text{for every }t<T.
    \end{equation}
\end{proposition}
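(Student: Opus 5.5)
The first assertion, $\underline z(t)\to a$, is exactly Lemma~\ref{lem:zmin}, applied on the interval $t\le t_0$ where $\Gamma_t$ is graphical. What remains is to identify the backward limit of the length. Since $L$ is nonincreasing by \eqref{eq:l-evol}, the limit $L_{-\infty}\in(0,\infty]$ exists. It therefore suffices to find one sequence $t_j\to-\infty$ with $L(t_j)\to2\pi r(a)$. Once $L_{-\infty}=2\pi r(a)$ is known, monotonicity gives $L(t)\le L_{-\infty}=2\pi r(a)$ for every $t<T$, which is \eqref{eq:l-ub}.

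To produce the sequence I use the integrability \eqref{eq:d-int}. Since $\int_{-\infty}^{t_0}D\,dt<\infty$ and $D\ge0$, there is a sequence $t_j\to-\infty$ with $D(t_j)\to0$. From the definition \eqref{eq:d-def}, with $e^{z-a}\ge e^{\underline z(t)-a}\ge e^{-1}$ for large $j$ and $r(z)\le r(a)$, we get
\[
\int_0^{2\pi}\varphi(|z_\theta|)\,d\theta\le e\,D(t_j)\to0,
\]
where $\varphi$ is the convex function \eqref{eq:phi}. By Jensen's inequality, as in the proof of Lemma~\ref{lem:fd}, this forces $\int_0^{2\pi}|z_\theta(\theta,t_j)|\,d\theta\to0$, so the total variation of the height tends to zero along this sequence.

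Then I compute the length in the graph parametrization:
\[
L(t)=\int_0^{2\pi}\sqrt{r(z)^2+z_\theta^2}\,d\theta,
\]
which satisfies
\[
\int_0^{2\pi}r(z)\,d\theta\le L(t)\le\int_0^{2\pi}r(z)\,d\theta+\int_0^{2\pi}|z_\theta|\,d\theta .
\]
By Lemma~\ref{lem:zmin}, $z(\cdot,t)\to a$ uniformly, so by continuity of $r$ at $a$ the first integral tends to $2\pi r(a)$. Along $t_j$ the second integral tends to $0$, hence $L(t_j)\to2\pi r(a)$ and therefore $L_{-\infty}=2\pi r(a)$.

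The main obstacle is the step from $D(t_j)\to0$ to $\int|z_\theta|\to0$, because $\varphi$ is only superlinear near $0$. I handle it as follows. Since $\varphi$ is convex, increasing, $\varphi(0)=0$ and $\varphi>0$ on $(0,\infty)$, Jensen gives $2\pi\varphi\bigl(\frac{1}{2\pi}\int|z_\theta|\bigr)\le\int\varphi(|z_\theta|)\to0$. Because $\varphi$ is a continuous increasing bijection of $[0,\infty)$, the mean of $|z_\theta|$ tends to zero. This needs no a priori gradient bound.
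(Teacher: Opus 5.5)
Your proposal is correct and follows the paper's proof essentially step for step: you take $t_j\to-\infty$ with $D(t_j)\to0$ from \eqref{eq:d-int}, combine the lower bound on $e^{z-a}$ with Jensen's inequality for $\varphi$ to make the total variation of the height vanish, squeeze $L(t_j)$ between $\int_0^{2\pi} r(z)\,d\theta$ and $\int_0^{2\pi} r(z)\,d\theta+\int_0^{2\pi}|z_\theta|\,d\theta$, and conclude by monotonicity of $L$. The differences are only cosmetic: the paper uses the constant $1/2$ where you use $e^{-1}$, and it bounds the length by $2\pi r(\underline z(t_j))$ and $2\pi r(a)+V_j$, where you keep the integral $\int_0^{2\pi} r(z)\,d\theta$.
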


\begin{proof}
    The height convergence follows from Lemma \ref{lem:zmin}.
    By \eqref{eq:d-int}, there is a sequence \(t_j\to-\infty\) such that \(D(t_j)\to 0\).
    Since \(\underline{z}(t_j)\to a\), for all sufficiently large \(j\) we have \(z(\theta,t_j)\ge a-\log2\) for every \(\theta\), and hence \(e^{z-a}\ge 1/2\) on the entire graph.
    Define
    \[
    V_j:=\int_0^{2\pi}|z_\theta(\theta,t_j)|\,d\theta.
    \]
    With the function \(\varphi\) in \eqref{eq:phi}, Jensen's inequality now gives
    \begin{equation}\label{eq:d-jlb}
        \begin{split}
           D(t_j)&\ge \frac12 \int_0^{2\pi}\varphi(|z_\theta(\theta,t_j)|)\,d\theta\\
           &\ge \pi \varphi\left( \frac{V_j}{2\pi}\right)=\frac{V_j}{2}\arctan\left( \frac{V_j}{2\pi r(a)}\right).
        \end{split}
    \end{equation}
    The last expression is strictly positive whenever \(V_j>0\) and is an increasing function of \(V_j\).
    Thus \(D(t_j)\to 0\) implies \(V_j\to 0\).

    The induced line element on a graph is \(\sqrt{r(z)^2+z_\theta^2}\,d\theta\).
    The elementary inequalities
    \[
    r(z)\le \sqrt{r(z)^2+z_\theta^2}\le r(z)+|z_\theta|
    \]
    and the monotonicity of \(r\) imply
    \begin{equation}\label{eq:l-sqz}
        2\pi r(\underline{z}(t_j))\le L(t_j)=\int_0^{2\pi}\sqrt{r(z)^2+z_\theta^2}\,d\theta\le 2\pi r(a)+V_j.
    \end{equation}
    Both outer expressions tend to \(2\pi r(a)\), so \(L(t_j)\to 2\pi r(a)\).

    Since \(L(t)\) is nonincreasing by \eqref{eq:l-evol}, the convergence \(L(t_j)\to2\pi r(a)\) gives \(\lim_{t\to-\infty}L(t)=2\pi r(a)\) and \eqref{eq:l-ub} for every \(t<T\).
\end{proof}

Combining Propositions \ref{prop:gph} and \ref{prop:l-gph}, we obtain the following result.

\begin{proposition}\label{prop:lg}
    Let $\{\Gamma_t\}_{t\in(-\infty,T)}$ be a smooth, closed, embedded ancient solution whose curves are not null-homotopic in $M$.
    Then the following are equivalent:
    \begin{enumerate}[label=\textup{(\arabic*)}]
        \item \label{cond:l}
        There are \(t_0<T\) and \(\Lambda<\infty\) such that \(L(t)\le \Lambda\) for every \(t\le t_0\).
        \item\label{cond:gph}
        There is \(t_0<T\) such that \(\Gamma_t\) is graphical over \(S^1\) for every \(t\le t_0\).
        \end{enumerate}
        Under either condition,
        \begin{equation}\label{eq:lg-lim}
            \min_{\Gamma_t}z\to a,\qquad \lim_{t\to-\infty}L(t)=2\pi r(a),\qquad L(t)\le 2\pi r(a)\quad (t<T).
        \end{equation}
        Moreover, in the graphical parametrization at sufficiently negative times, \(z(\cdot,t)\to a\) in \(C^1(S^1)\).
\end{proposition}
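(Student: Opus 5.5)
The statement to prove is Proposition \ref{prop:lg}. The plan is to assemble it from Propositions \ref{prop:gph} and \ref{prop:l-gph}, together with the monotonicity of length \eqref{eq:l-evol}.

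\emph{From \ref{cond:l} to \ref{cond:gph}.} Assume \ref{cond:l}. Then \eqref{eq:l-bdd} holds, and Proposition \ref{prop:gph} gives some $T_0<T$ such that $\Gamma_t$ is graphical for all $t\le T_0$. This is \ref{cond:gph} with $t_0=T_0$. The same proposition also gives the $C^1$ convergence $z(\cdot,t)\to a$ in the graphical parametrization, which is the ``moreover'' clause under \ref{cond:l}.

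\emph{From \ref{cond:gph} to \ref{cond:l}.} Assume \ref{cond:gph}. Proposition \ref{prop:l-gph} applies directly and yields $\underline z(t)\to a$, $L(t)\to 2\pi r(a)$, and $L(t)\le 2\pi r(a)$ for all $t<T$. So \ref{cond:l} holds with $\Lambda=2\pi r(a)$ and any $t_0$.

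\emph{The limits \eqref{eq:lg-lim} and the $C^1$ clause.} Since we have just shown the two conditions are equivalent, both hold under either one. Proposition \ref{prop:l-gph} then gives all three assertions of \eqref{eq:lg-lim}; the first of these is $\min_{\Gamma_t} z=\underline z(t)\to a$. Likewise, Proposition \ref{prop:gph} gives the $C^1$ convergence of the height in the graphical parametrization.

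\emph{Consistency of parametrizations.} The only point requiring care is that the graphical parametrization in Proposition \ref{prop:gph} is the same as the one on $t\le t_0$ used in Section \ref{sec:gl}. Both are the angular parametrization of a degree-one graph, so the function $z(\theta,t)$ is uniquely determined by $\Gamma_t$. Hence the two coincide, and the $C^1$ statement is unambiguous.
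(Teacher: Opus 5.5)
Your proposal is correct and matches the paper's proof: Proposition~\ref{prop:gph} gives (1)$\Rightarrow$(2) together with the $C^1$ convergence, Proposition~\ref{prop:l-gph} gives (2)$\Rightarrow$(1) together with \eqref{eq:lg-lim}, and the equivalence lets you apply both propositions under either hypothesis. Your remark that the graph function $z(\cdot,t)$ is uniquely determined by $\Gamma_t$ is a harmless clarification that the paper leaves implicit.
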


\begin{proof}
    Proposition \ref{prop:gph} proves that condition \ref{cond:l} implies condition \ref{cond:gph}, and Proposition \ref{prop:l-gph} proves the converse.
    Under either condition, the two propositions give \eqref{eq:lg-lim} and the asserted \(C^1\) convergence.
\end{proof}

\begin{proof}[Proof of Theorem \ref{thm:main}]
Proposition \ref{prop:l-gph} gives \ref{main:ii} $\Rightarrow$ \ref{main:iii}, and \ref{main:iii} $\Rightarrow$ \ref{main:iv} follows from $\sqrt{-t}\to\infty$.
Proposition \ref{prop:subsqrt} gives \ref{main:iv} $\Rightarrow$ \ref{main:ii}.
Under any of these conditions, Proposition \ref{prop:lg} gives the asserted $C^1$ convergence and length limit.
\end{proof}

\section{Proof of Theorem \ref{thm:rig}: \texorpdfstring{$L^2$}{L2} energy method}\label{sec:proof}

Throughout this section, we consider an ancient solution satisfying either condition of Proposition \ref{prop:lg}.
By that proposition, the solution is graphical at all sufficiently negative times and converges to the limiting circle $S^1\times\{a\}$ in \(C^1\).
Lemma \ref{lem:gph-pr} then gives graphicality at every later time.
We use the fixed angular parametrization \(\bar F(\theta,t)=(\theta,z(\theta,t))\).

Define
\begin{equation}\label{eq:mean}
    m(t):=\frac{1}{2\pi}\int_0^{2\pi}z(\theta,t)\,d\theta,\qquad u(\theta,t):=z(\theta,t)-m(t).
\end{equation}
Then $\int_0^{2\pi}u\,d\theta=0$.
We write
\[
\mean{v}:=\frac{1}{2\pi}\int_0^{2\pi}v\,d\theta, \qquad p:=u_\theta=z_\theta,\qquad q:=\frac{p}{r(z)}.
\]

\begin{corollary}\label{cor:small}
    For every \(\varepsilon>0\), there exists \(T_\varepsilon<T\) such that
    \begin{equation}\label{eq:small}
        |m(t)-a|\le\varepsilon,\qquad \norm{u(\cdot,t)}_{C^1(S^1)}\le \varepsilon,\qquad \text{for every }t\le T_\varepsilon.
    \end{equation}
\end{corollary}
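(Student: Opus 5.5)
This corollary follows directly from the $C^1$ convergence in Proposition \ref{prop:lg}, which was established in Proposition \ref{prop:gph}. Under the standing assumption of this section, the solution is graphical at all sufficiently negative times, and in the fixed angular parametrization we have
\[
\norm{z(\cdot,t)-a}_{C^1(S^1)}\to0\qquad (t\to-\infty).
\]
Given $\varepsilon>0$, we therefore choose $T_\varepsilon<T$ with $T_\varepsilon$ below the graphicality threshold and with
\[
\norm{z(\cdot,t)-a}_{C^1(S^1)}\le\varepsilon/2\qquad\text{for every } t\le T_\varepsilon.
\]

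Next we bound the mean. Since $m(t)-a=\mean{z-a}$, averaging gives
\[
|m(t)-a|\le\norm{z(\cdot,t)-a}_{L^\infty}\le\varepsilon/2.
\]

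Finally we bound the perturbation. Write $u=(z-a)-(m-a)$, so that
\[
\norm{u}_{L^\infty}\le \norm{z-a}_{L^\infty}+|m-a|\le\varepsilon.
\]
For the derivative, $u_\theta=z_\theta=(z-a)_\theta$, hence $\norm{u_\theta}_{L^\infty}\le\varepsilon/2$. Whichever convention is used for the $C^1$ norm (the sum or the maximum of the sup norms of the function and its derivative), combining these bounds gives $\norm{u}_{C^1}\le\varepsilon$ after shrinking the initial $\varepsilon/2$ to $\varepsilon/3$ if necessary.

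There is no real obstacle here. The only point requiring care is that the fixed angular parametrization exists for all $t\le T_\varepsilon$; this is guaranteed by Proposition \ref{prop:gph} together with Lemma \ref{lem:gph-pr}.
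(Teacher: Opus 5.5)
Your proposal is correct and matches the paper's proof: both deduce the corollary from the $C^1$ convergence $z(\cdot,t)\to a$ in Proposition \ref{prop:lg}, average to get $m(t)\to a$, and use $u=z-m$, $u_\theta=z_\theta$ to conclude $u(\cdot,t)\to 0$ in $C^1(S^1)$. Your explicit choice of $\varepsilon/3$ just writes out the quantifiers that the paper leaves implicit.
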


\begin{proof}
    Proposition \ref{prop:lg} gives \(z(\cdot,t)\to a\) in \(C^1(S^1)\) as \(t\to-\infty\).
    Taking the average gives \(m(t)\to a\).
    Since \(u=z-m\) and \(u_\theta=z_\theta\), it follows that \(u(\cdot,t)\to 0\) in \(C^1(S^1)\).
\end{proof}

To use the smallness in Corollary \ref{cor:small}, we separate the linear terms in the evolution equation for $u$.

\begin{proposition}\label{prop:lin}
    The perturbation satisfies
    \begin{equation}\label{eq:lin}
        u_t=r(m)^{-2}u_{\theta \theta}-f'(m)u+\mathcal{N}[u], \qquad \mathcal{N}[u]:=\mathcal{R}[u]-\mean{\mathcal{R}[u]},
    \end{equation}
    where
    \begin{equation}\label{eq:r-def}
        \mathcal{R}[u]:=\left[ \frac{1}{r(m+u)^2+p^2}-r(m)^{-2} \right]u_{\theta \theta}-\rho-f(m+u)\frac{p^2}{r(m+u)^2+p^2},
    \end{equation}
    with the Taylor remainder
    \begin{equation}\label{eq:rf-def}
        \rho:=f(m+u)-f(m)-f'(m)u.
    \end{equation}
    Moreover,
    \begin{equation}\label{eq:m-evol}
        m'(t)=-f(m)+\mean{\mathcal{R}[u]},
    \end{equation}
    and the contribution of the projected remainder to the energy satisfies
    \begin{equation}\label{eq:proj}
        \mean{u\mathcal{N}[u]}=\mean{u\mathcal{R}[u]}.
    \end{equation}
\end{proposition}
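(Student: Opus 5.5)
This is a direct algebraic computation from the graphical height equation \eqref{eq:h-gph}. I write $z=m+u$, with $p=z_\theta=u_\theta$ and $z_{\theta\theta}=u_{\theta\theta}$. The first step rewrites the second term of \eqref{eq:h-gph} using
\[
\frac{r(z)^2+2p^2}{r(z)^2+p^2}=1+\frac{p^2}{r(z)^2+p^2}.
\]
This gives
\[
z_t=\frac{u_{\theta\theta}}{r(z)^2+p^2}-f(z)-f(z)\frac{p^2}{r(z)^2+p^2}.
\]

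The second step separates the linear parts. I add and subtract $r(m)^{-2}u_{\theta\theta}$, and expand $f(m+u)=f(m)+f'(m)u+\rho$ with $\rho$ as in \eqref{eq:rf-def}. This yields the pointwise identity
\[
z_t=r(m)^{-2}u_{\theta\theta}-f(m)-f'(m)u+\mathcal R[u],
\]
with $\mathcal R[u]$ exactly as in \eqref{eq:r-def}.

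The third step averages in $\theta$. Since $z$ is smooth, $m$ is differentiable and $m'=\langle z_t\rangle$. Periodicity gives $\langle u_{\theta\theta}\rangle=0$, and the definition \eqref{eq:mean} gives $\langle u\rangle=0$. Hence
\[
m'=-f(m)+\langle\mathcal R[u]\rangle,
\]
which is \eqref{eq:m-evol}. Subtracting this from the pointwise identity and using $u_t=z_t-m'$ cancels $-f(m)$ and replaces $\mathcal R[u]$ by $\mathcal R[u]-\langle\mathcal R[u]\rangle$. This gives \eqref{eq:lin}.

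Finally, \eqref{eq:proj} holds because $\langle u\rangle=0$ implies
\[
\langle u\,\mathcal N[u]\rangle=\langle u\mathcal R[u]\rangle-\langle\mathcal R[u]\rangle\langle u\rangle=\langle u\mathcal R[u]\rangle.
\]

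There is no real obstacle here; the only points needing care are the following. The fixed angular parametrization is in force at all times, by Proposition \ref{prop:lg} and Lemma \ref{lem:gph-pr}, so \eqref{eq:h-gph} holds for every $t<T$. The quantity $z_t$ in \eqref{eq:h-gph} is taken at fixed $\theta$, so averaging commutes with differentiation in $t$.
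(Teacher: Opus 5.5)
Your proposal is correct and follows the paper's proof. You substitute $z=m+u$ into \eqref{eq:h-gph} to get the exact identity for $z_t$ with the linear terms separated, average in $\theta$ to obtain \eqref{eq:m-evol}, subtract to get \eqref{eq:lin}, and use $\mean{u}=0$ for \eqref{eq:proj}. Your only addition is spelling out the splitting $\frac{r^2+2p^2}{r^2+p^2}=1+\frac{p^2}{r^2+p^2}$, which the paper leaves implicit.
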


\begin{proof}
    Substituting \(z=m+u\) and \(p=u_\theta\) into \eqref{eq:h-gph}, and adding and subtracting the linear terms, gives the exact identity
    \[
    z_t=-f(m)+r(m)^{-2}u_{\theta \theta} -f'(m)u+\mathcal{R}[u].
    \]
    The averages of \(u\) and \(u_{\theta \theta}\) vanish.
    Taking the average therefore gives \(m'=-f(m)+\mean{\mathcal{R}[u]}\).
    Subtracting this equation from the equation for \(z\) proves \eqref{eq:lin}.

    The identity \eqref{eq:proj} follows from $\mean{u}=0$.
\end{proof}

The remainder contains $u_{\theta\theta}$, whereas Corollary \ref{cor:small} gives only $C^1$ smallness.
We therefore estimate its contribution to the energy, which after an integration by parts involves only $u$ and $u_\theta$.
We keep $\rho$ with the linear reaction term $-f'(m)u$ since $f'$ need not have a limit at $a$.

\begin{lemma}\label{lem:nl}
Fix $b<a$.
There exists $C>0$, depending only on $r(b)$ and $\sup_{[b,a)}|r'|$, such that, at any time $t$ for which $b<z(\theta,t)<a$ for all $\theta$ and $\|u(\cdot,t)\|_{C^1(S^1)}\le\varepsilon\le1$, we have
\begin{equation}\label{eq:nl-bd}
 \left|\langle u\mathcal{N}[u]\rangle
          +\langle u\rho\rangle\right|
       \le C\varepsilon\langle u_\theta^2\rangle.
\end{equation}
The constant is independent of $t$ and $\varepsilon$.
\end{lemma}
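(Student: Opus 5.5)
The plan is to exploit a cancellation first and then integrate by parts so that $u_{\theta\theta}$ never has to be estimated pointwise. By \eqref{eq:proj} we have $\mean{u\mathcal N[u]}=\mean{u\mathcal R[u]}$. Inserting \eqref{eq:r-def}, the term $-\rho$ in $\mathcal R[u]$ cancels exactly against $\mean{u\rho}$, which is why the two quantities are grouped in \eqref{eq:nl-bd}. Writing $z=m+u$ and $\sigma^2=r(z)^2+p^2$, this leaves
\[
\mean{u\mathcal N[u]}+\mean{u\rho}=\Bigl\langle u\,\frac{p_\theta}{\sigma^2}\Bigr\rangle-r(m)^{-2}\mean{u\,p_\theta}-\Bigl\langle u f(z)\frac{p^2}{\sigma^2}\Bigr\rangle .
\]
The point of the cancellation is that $f'$ (and hence $\rho$) need not be controlled near $a$. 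Only $f$ itself enters, and on the strip it satisfies $|f|\le \sup|r'|/r(b)$.

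The last term is immediate. Using $|u|\le\varepsilon$, $|f(z)|\le C$ and $p^2/\sigma^2\le p^2/r(b)^2$, it is bounded by $C\varepsilon\mean{p^2}=C\varepsilon\mean{u_\theta^2}$. The middle term integrates by parts to $+r(m)^{-2}\mean{p^2}$.

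For the first term, I will not integrate by parts naively, since $\partial_\theta(\sigma^{-2})$ again produces $u_{\theta\theta}$. Instead I use the divergence identity already used for \eqref{eq:h-div1},
\[
\partial_\theta\arctan q=\frac{r(z)p_\theta-r'(z)p^2}{\sigma^2},\qquad q=\frac{p}{r(z)},
\]
which gives
\[
\frac{p_\theta}{\sigma^2}=\frac{1}{r(z)}\,\partial_\theta\arctan q+\frac{r'(z)p^2}{r(z)\sigma^2}.
\]
The second piece contributes at most $C\varepsilon\mean{p^2}$ after multiplication by $u$. For the first piece, integration by parts on the circle gives
\[
\Bigl\langle \frac{u}{r(z)}\partial_\theta\arctan q\Bigr\rangle=-\Bigl\langle \partial_\theta\Bigl(\frac{u}{r(z)}\Bigr)\arctan q\Bigr\rangle
=-\mean{q\arctan q}+\Bigl\langle \frac{u f(z)\,p}{r(z)}\arctan q\Bigr\rangle .
\]
Here the last term is again bounded by $C\varepsilon\mean{p^2}$, since $|\arctan q|\le|q|\le |p|/r(b)$.

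It remains to compare $r(m)^{-2}\mean{p^2}=\mean{q^2 r(z)^2/r(m)^2}$ with $\mean{q\arctan q}$. Two elementary estimates do this. First, $0\le q^2-q\arctan q\le q^4/3\le C\varepsilon^2q^2$, because $|q|\le\varepsilon/r(b)$. Second, $|r(z)^2/r(m)^2-1|\le C|u|\le C\varepsilon$, by the mean value theorem with $|r'|$ bounded and $r\ge r(b)$. Together these give
\[
\Bigl|r(m)^{-2}\mean{p^2}-\mean{q\arctan q}\Bigr|\le C\varepsilon\mean{p^2}.
\]
The main obstacle, namely the second-order term, is thus handled entirely by the arctan divergence structure. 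All constants depend only on $r(b)$, $\sup_{[b,a)}|r'|$, and $r(a)\le r(b)+(a-b)\sup|r'|$. In particular they are independent of $t$ and of $\varepsilon\le1$, which proves \eqref{eq:nl-bd}.
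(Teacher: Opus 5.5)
Your argument is correct and is essentially the paper's proof. The paper obtains $\mathcal R[u]+\rho=\partial_\theta\mathcal H+f(z)q\arctan q$ from the divergence form \eqref{eq:h-div2} and integrates by parts once, arriving at exactly your identity $r(m)^{-2}\mean{p^2}-\mean{q\arctan q}+\mean{uf(z)q\arctan q}$; your two $f(z)p^2/\sigma^2$ terms actually cancel exactly, since $r'/r=f$. It then uses the same two elementary estimates. The only blemish is that comparing $r(z)^2/r(m)^2$ with $1$ brings in $r(a)$, and hence $a-b$. Writing the error instead as $p^2\,|r(m)^{-2}-r(z)^{-2}|$ and using $|(r^{-2})'|\le 2\sup_{[b,a)}|r'|/r(b)^3$ gives a constant depending only on $r(b)$ and $\sup_{[b,a)}|r'|$, as stated.
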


\begin{proof}
    Define the flux remainder
    \begin{equation}\label{eq:flux}
        \mathcal{H}:=\frac{1}{r(z)}\arctan\left( \frac{p}{r(z)} \right)-r(m)^{-2}p.
    \end{equation}
    Comparing \eqref{eq:h-div2} with the decomposition in Proposition \ref{prop:lin} gives
    \begin{equation}\label{eq:r-sum}
        \mathcal{R}[u]+\rho=\partial_\theta\mathcal{H}+f(z)q\arctan q.
    \end{equation}
    Using \eqref{eq:proj} and integrating by parts, we obtain
    \begin{equation}\label{eq:nl-id}
        \mean{u\mathcal{N}[u]}+\mean{u\rho}=-\mean{p\mathcal{H}}+\mean{uf(z)q\arctan q}.
    \end{equation}

    The mean \(m\) lies between the minimum and maximum of \(z\).
    Since \(r\ge r(b)>0\) on the strip $S^1\times[b,a)$, both \(r^{-1}\) and the derivative of \(r^{-2}\) are bounded there.
    Write
    \[
    \mathcal{H}=\left[\frac{1}{r(z)^2}-r(m)^{-2}\right]p+\frac{1}{r(z)}(\arctan q-q).
    \]
    The mean value theorem and
    \[
    |\arctan q-q|\le\frac{|q|^3}{3}
    \]
    give
    \begin{equation}\label{eq:flux-bd}
        |\mathcal{H}|\le C(|u|+p^2)|p|.
    \end{equation}
    Furthermore, \(f\) is bounded on this strip and \(|q\arctan q|\le q^2\), so
    \[
    |uf(z)q\arctan q|\le C|u|p^2.
    \]
    Since \(|u|,|p|\le\varepsilon\le 1\), these inequalities imply
    \[
    |p\mathcal{H}|+|uf(z)q\arctan q|\le C\varepsilon p^2.
    \]
    Averaging this inequality and using \eqref{eq:nl-id} proves \eqref{eq:nl-bd}.
\end{proof}

We combine this nonlinear estimate with the assumption $\mu>0$ and the Wirtinger inequality to obtain decay of the $L^2$ energy.
Define the $L^2$ energy of $u$ by
\begin{equation}\label{eq:a-def}
    \mathcal{A}(t):=\mean{u^2}=\frac{1}{2\pi}\int_0^{2\pi}u(\theta,t)^2\,d\theta.
\end{equation}

\begin{proposition}\label{prop:a-dec}
    Assume $\mu>0$.
    There exists \(T_0<T\) such that
    \begin{equation}\label{eq:a-dec}
        \A'(t)\le -\frac{\mu}{2}\A(t)\qquad \text{for every }t\le T_0.
    \end{equation}
\end{proposition}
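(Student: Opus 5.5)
We differentiate $\A$ using the linearized equation of Proposition~\ref{prop:lin}, then control the linear and nonlinear parts separately. Since $\mean{u}=0$ and $u$ is smooth, \eqref{eq:lin} gives
\[
\tfrac12\A'(t)=\mean{uu_t}=r(m)^{-2}\mean{uu_{\theta\theta}}-f'(m)\mean{u^2}+\mean{u\mathcal N[u]}.
\]
Integrating by parts, the first term equals $-r(m)^{-2}\mean{u_\theta^2}$. We will not expand $f'$ in $u$, because $f'$ need not be continuous up to $a$. Instead we group the remainder $\rho$ with the reaction term, using the identity $f'(m)u+\rho=f(m+u)-f(m)$. This gives
\[
\tfrac12\A'=-r(m)^{-2}\mean{u_\theta^2}-f'(m)\A+\bigl(\mean{u\mathcal N[u]}+\mean{u\rho}\bigr)-\mean{u\rho}.
\]
The bracket is bounded by $C\varepsilon\mean{u_\theta^2}$ by Lemma~\ref{lem:nl}. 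That lemma applies for $t\le T_\varepsilon$: by Corollary~\ref{cor:small} and Proposition~\ref{prop:lg} we may fix $b<a$ with $z>b$ at such times, so the constant $C$ is uniform.

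The main obstacle is the leftover $-\mean{u\rho}$ together with $-f'(m)\A$. Here $\mu$ is only a $\liminf$ of $r^{-2}+f'$, while $f'$ could oscillate near $a$. To handle this, write $-\mean{u\rho}-f'(m)\A=-\mean{u(f(m+u)-f(m))}$. By the mean value theorem, $u(f(m+u)-f(m))=f'(\xi)u^2$ for some $\xi$ between $m$ and $m+u$, hence within $\varepsilon$ of $a$. Choose $\delta>0$ so that $r(z)^{-2}+f'(z)\ge \tfrac34\mu$ for $z\in(a-\delta,a)$. Then
\[
-f'(\xi)\le r(\xi)^{-2}-\tfrac34\mu\le r(m)^{-2}+C\varepsilon-\tfrac34\mu,
\]
using that $r^{-2}$ is Lipschitz on $[b,a)$. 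So the reaction contributions are at most $(r(m)^{-2}-\tfrac34\mu+C\varepsilon)\A$. This means dropping Lemma~\ref{lem:nl}'s split of $\rho$ and applying the lemma only as stated to the sum; I accept that bookkeeping.

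Combining the estimates gives
\[
\tfrac12\A'\le -(r(m)^{-2}-C\varepsilon)\mean{u_\theta^2}+(r(m)^{-2}-\tfrac34\mu+C\varepsilon)\A.
\]
The Wirtinger inequality $\mean{u_\theta^2}\ge\A$ holds for mean-zero periodic $u$, and for $\varepsilon$ small the coefficient $r(m)^{-2}-C\varepsilon$ is positive. Inserting Wirtinger, the $r(m)^{-2}$ terms cancel, leaving $\tfrac12\A'\le(-\tfrac34\mu+2C\varepsilon)\A$. Choosing $\varepsilon\le\mu/(8C)$ and $\varepsilon<\delta$ yields $\tfrac12\A'\le-\tfrac12\mu\A$, that is, $\A'\le-\mu\A\le-\tfrac{\mu}{2}\A$, for all $t\le T_0:=T_\varepsilon$.
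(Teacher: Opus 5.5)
Your proof is correct and follows the paper's argument: the same energy identity, Lemma~\ref{lem:nl} applied to the sum $\mean{u\mathcal N[u]}+\mean{u\rho}$, the regrouping $-f'(m)\A-\mean{u\rho}=-\mean{u(f(z)-f(m))}$, a lower bound on $f'$ near $a$ coming from $\mu>0$, and the Wirtinger inequality. The differences are cosmetic. You compare $r(\xi)^{-2}$ with $r(m)^{-2}$ through the Lipschitz bound on $r^{-2}$, whereas the paper uses an $\eta$-margin together with $r(m)\le r(a)$. Also, by Corollary~\ref{cor:small} the point $\xi$ is only within $2\varepsilon$ of $a$, not within $\varepsilon$, so you need $2\varepsilon<\delta$.
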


\begin{proof}
    Multiplying \eqref{eq:lin} by $u$, taking the average, and integrating by parts, we obtain
    \begin{equation}\label{eq:a-id}
        \frac12\A'=-r(m)^{-2}\mean{u_\theta^2}-f'(m)\A+\mean{u\mathcal{N}[u]}.
    \end{equation}
    Since $\mean{u}=0$, the Wirtinger inequality on the circle of length $2\pi$ gives
    \begin{equation}\label{eq:wirt}
        \mean{u_\theta^2}\ge\A.
    \end{equation}

    By Lemma \ref{lem:nl}, on a fixed strip $S^1\times[b,a)$ and whenever $\|u\|_{C^1}\le\varepsilon\le1$,
    \[
        \mean{u\mathcal{N}[u]}\le-\mean{u\rho}+C\varepsilon\mean{u_\theta^2}.
    \]
    Using \eqref{eq:rf-def}, we obtain
    \begin{equation}\label{eq:i23}
        -f'(m)\A+\mean{u\mathcal{N}[u]}\le-\mean{u(f(z)-f(m))}+C\varepsilon\mean{u_\theta^2}.
    \end{equation}
Thus,
\begin{equation}\label{eq:a-ub}
    \frac12 \A'\le -(r(m)^{-2}-C\varepsilon)\mean{u_\theta^2}-\mean{u(f(z)-f(m))}.
\end{equation}

Choose
\[
\eta:=\min \left\{\frac{\mu}{8}, \frac{1}{4r(a)^2}\right\}>0.
\]
    Since \(r(z)\to r(a)\) and \(\liminf_{z\to a^-}(r(z)^{-2}+f'(z))=\mu>0\), there exists \(\delta\in(0,1)\) such that
    \begin{equation}\label{eq:h-lb}
        \frac{1}{r(z)^2}+f'(z)\ge \frac{\mu}{2}, \qquad \frac{1}{r(z)^2}\le \frac{1}{r(a)^2}+\eta\qquad (a-\delta<z<a).
    \end{equation}
In particular,
\begin{equation}\label{eq:beta}
    f'(z)\ge \beta :=\frac{\mu}{2}-\frac{1}{r(a)^2}-\eta \qquad (a-\delta<z<a).
\end{equation}
If $a-\delta<z<a$ along the graph, then $m$ and the segment joining $m$ to $z=m+u$ lie in $(a-\delta,a)$.
Thus,
\begin{equation}\label{eq:react}
    u(f(z)-f(m))=u^2\int_0^1f'(m+\lambda u)\,d\lambda\ge \beta u^2.
\end{equation}

Fix the constant \(C\) in Lemma \ref{lem:nl} for \(b=a-\delta\), and choose \(0<\varepsilon_0\le 1\) so that \(C\varepsilon_0\le \eta\).
By Corollary \ref{cor:small}, choose \(T_0<T\) so that the graph lies in \(S^1\times(a-\delta,a)\) and \(\norm{u(\cdot,t)}_{C^1}\le \varepsilon_0\) for every \(t\le T_0\).
Equations \eqref{eq:a-ub} and \eqref{eq:react} therefore yield
\begin{equation}\label{eq:a-p}
    \frac12 \A'\le -(r(m)^{-2}-C\varepsilon_0)\mean{u_\theta^2}-\beta \A.
\end{equation}
Since \(r(m)\le r(a)\),
\[
r(m)^{-2}-C\varepsilon_0\ge \frac{1}{r(a)^2}-\eta>0.
\]
Applying \eqref{eq:wirt} in \eqref{eq:a-p}, we obtain
\begin{equation*}
    \begin{split}
        \frac12 \A'
        &\le -(r(m)^{-2}+\beta-C\varepsilon_0)\A\\
        &\le -\left( \frac{\mu}{2}-\eta-C\varepsilon_0\right)\A\\
        &\le -\left( \frac{\mu}{2}-2\eta \right)\A \le -\frac{\mu}{4}\A.\qedhere
    \end{split}
\end{equation*}
\end{proof}

Since $\mathcal A(t)\to0$ as $t\to-\infty$, this decay implies that $u$ vanishes at all sufficiently negative times.

\begin{proof}[Proof of Theorem \ref{thm:rig}]
Let $T_0$ be as in Proposition \ref{prop:a-dec}.
Fix an arbitrary $\tau\le T_0$.
Integrating \eqref{eq:a-dec} over $[t,\tau]$, with $t<\tau$, gives
\begin{equation}\label{eq:anc}
 0\le \A(\tau)\le
    e^{-\frac{\mu}{2}(\tau-t)}\A(t).
\end{equation}
Corollary \ref{cor:small} gives $\A(t)\to0$ as $t\to-\infty$.
Letting $t\to-\infty$ with $\tau$ fixed in \eqref{eq:anc}, we obtain $\A(\tau)=0$.
Since $\tau\le T_0$ was arbitrary, $u\equiv0$ for every $t\le T_0$.
Thus the solution is a slice at all sufficiently negative times.

By Lemma~\ref{lem:slice} and uniqueness for smooth curve shortening flow (see \cite[Section~1]{Fujihara25}), $\Gamma_t$ remains a slice for every $T_0\le t<T$.

Write $\Gamma_t=S^1\times\{Z(t)\}$.
Then the height equation gives
\[
 Z'=-f(Z)=-\frac{r'(Z)}{r(Z)}.
\]
By Theorem \ref{thm:main}, $Z(t)\to a$ as $t\to-\infty$.
Since $Z(t)<a$ and $r$ is strictly increasing,
\[
 L(t)=2\pi r(Z(t))<2\pi r(a).\qedhere
\]
\end{proof}

\section*{AI disclosure}
During the preparation of this work, the authors used ChatGPT (OpenAI) to improve the organization and language of the manuscript and to create Figures~\ref{fig:slice} and~\ref{fig:ext}.
After using this tool, the authors reviewed and edited the content as needed and take full responsibility for the content of the publication.

\end{document}